\theoremstyle{plain}
\newtheorem{statement}{Statement}
\newtheorem{lemma}{Lemma}
\newtheorem{theorem}{Theorem}
\DeclareMathOperator{\perm}{perm}
\newcommand{\calP}{\mathcal{P}}
\newcommandx{\PP}[2][1=, 2=]{%
	\ifthenelse{\equal{#1}{}}{\calP}{\calP_{#1}}%
	\ifthenelse{\equal{#2}{}}{}{\val{#2}}%
	}	
\newcommandx{\PI}[2][1=, 2=]{%
	\ifthenelse{\equal{#1}{}}{\calP^{*}}{\calP^{*}_{#1}}%
	\ifthenelse{\equal{#2}{}}{}{\val{#2}}%
	}
\newcommandx{\D}[1][1=]{%
	\ifthenelse{\equal{#1}{}}{\Delta}{\Delta_{#1}}%
	}
\newcommand{\J}{\mathcal{J}}
\newcommand{\K}{\mathcal{K}}
\newcommand{\abs}[1]{\left|#1\right|}
\newcommand{\val}[1]{\left(#1\right)}
\newcommand{\curly}[1]{\left\{#1\right\}}
\newcommand{\nrm}[1]{\overline{#1}}
\newcommand{\Q}{\mathbb{Q}}
\newcommand{\C}{\mathbb{C}}
\newcommand{\Z}{\mathbb{Z}}
\newcommand{\N}{\mathbb{N}}
\newcommand{\R}{\mathbb{R}}
\newcommand{\al}{\alpha}
\newcommand{\ga}{\gamma}
\newcommand{\la}{\lambda}
\newcommandx{\A}[2][1=, 2=]{%
	\ifthenelse{\equal{#1}{}}{\mathbb{A}}{\mathbb{A}_{#1}}%
	\ifthenelse{\equal{#2}{}}{}{\val{#2}}%
	}
\title{Counting real algebraic numbers with bounded derivative of minimal polynomial}
\author{A. Kudin, D. Vasilyev}
\date{}
\begin{document} 
\maketitle
\begin{abstract}
	In this paper we consider the problem of counting algebraic numbers $\al$ of fixed degree $n$ and 
	bounded height $Q$ such that the derivative of the minimal polynomial $P_{\al}(x)$ of $\al$ is bounded, 
	$\abs{P'_{\al}(\al)} < Q^{1-v}$.
	This problem has many applications to the problems of the metric theory of Diophantine approximation.
	We prove that the number of $\al$ defined above on the interval 
	$\val{-\frac12, \frac12}$ doesn't exceed $c_1(n)Q^{n+1-\frac{1}{7}v}$ for $Q>Q_0(n)$ and $1.4 \le v \le \frac{7}{16}(n+1)$.
	Our result is based on an improvement to the lemma on the order of zero approximation by irreducible divisors 
	of integer polynomials from A. Gelfond's monograph "Transcendental and algebraic numbers".
	The improvement provides a stronger estimate for the absolute value of the divisor in real points which are located far enough 
	from all algebraic numbers of bounded degree and height and it's based on
	the representation of the resultant of two polynomials as the determinant of Sylvester
	matrix for the shifted polynomials.
	\\
	{\bf Keywords}: Diophantine approximation, Hausdorff dimension, transcendental numbers, resultant, Sylvester matrix, irreducible divisor, Gelfond's lemma
	\\
	{\bf 2010 Mathematics Subject Classification}: 11J83 (primary), 11J68
\end{abstract}

\section{Introduction}
Many problems in the metric theory of Diophantine approximation and the theory of transcendental numbers are formulated 
in terms of real, complex or $p$-adic number sets satisfying the following inequalities:
\begin{align}
	\label{mahler}
	\left| P(x) \right|      < H(P)^{-w_1}, \quad
	\left| P(z) \right|      < H(P)^{-w_2}, \quad
	\left| P(\omega) \right|_p < H(P)^{-w_3},
\end{align}
where $w_i > 0$, $x \in \R$, $z \in \C$, $\omega \in \Q_p$,
for infinitely many polynomials $P(x)$ from some class $\calP \subset \Z[x]$.
Here and throughout the paper for a polynomial $P(x) = a_n x^n + ... + a_1 x + a_0 \in \C[x]$
we denote by $H(P)$ its "naive height", i.e. $H(P)=\max_{0 \le i \le n} |a_i|$.
Complexity of the sets defined above motivates the search for their best possible approximations by combinations of simpler sets
(real intervals, complex circles or $p$-adic cylinders).

For simplicity let's consider only the real case now. 
Solutions to the first inequality of \eqref{mahler} are located (see \cite[Part I, Chapter I, §2, Lemma 2]{Spr67}) in the intervals of the form 
\begin{align*}
    |x-\alpha_1|<2^{n-1}|P(x)||P'(\alpha_1)|^{-1},
\end{align*}
where $\alpha_1$ is the closest to $x$ root of $P(x)$. These intervals can be quite large for small values of $|P'(\alpha_1)|$.
A natural solution to this problem is to find an upper bound for the number of polynomials having a small derivative at a root.

This approach has been used in R. Baker's work \cite{RBaker76}, for example.
For some integer $n \ge 1$ and real $H \ge 1$, $v \ge 0$ he considers the set $\tilde{\calP}_n(H,v)$ 
of primitive irreducible polynomials $P(x)$ of degree $n$ and height $H$, 
which are leading (that is, $\abs{a_n}=H$),
such that there exists a root $\alpha_1 \in \C$ of $P(x)$ with $|P'(\alpha_1)|<H^{1-v}$ 
and also some additional limitations specific to the problem being solved are implied.
R. Baker has proved for $0 \le v < 1$ and $H$ large enough that
\begin{align} \label{Pn(H,v)<}
    \# \tilde{\calP}_n \left(H,v\right) < c_1(n) H^{n-v},
\end{align}
where $c_1(n)$ is some value that depends on $n$ only.
Using this result he obtained for $n \ge 3$ and $w_1 > \frac{1}{3}\left(n^2+n-3\right)$ the exact upper bound for the Hausdorff dimension
of $x \in \R$, for which there are infinitely many integer polynomials of degree $n$ satisfying the first inequality of \eqref{mahler}.
The problem of calculating the Hausdorff dimension of this set was completely solved by V. Bernik \cite{Bern83} using a different approach.
But nevertheless estimates similar to \eqref{Pn(H,v)<} can be useful in many problems of the metric theory of Diophantine approximation, for example \cite{Spr67,BBG10-comp},
and they are interesting on their own as generalizations of the problems related to the distribution of algebraic numbers and algebraic integers
\cite{Spr67, Cas57, Sch80, Evertse04, BBG10-comp, BM2010, BBG2016, Bernik2015}.

Let's introduce some useful notation.
In the paper $\mu_k(A)$ will denote the Lebesgue measure of a measurable set $A \subset \R^k$, $k \in \N$.
Let $\PP[\le n]$ (resp. $\PP[=n]$) 
be the set of integer polynomials $P \in \Z[x]$ with $\deg P \le n$ (resp. $\deg P = n$) and let 
$\PP[\le n][Q]$ (resp. $\PP[=n][Q]$) 
be the set of polynomials $P \in \PP[\le n]$ (resp. $P \in \PP[=n]$) with $H(P) \le Q$.
In the paper Vinogradov symbol will be used extensively.
We will write $f \ll_{x_1,x_2,...} g$ if there is a real value $c > 0$,
which depends on $x_1, x_2, ...$, but doesn't depend on $f$ and $g$, such that $f \le c g$,
and also $f \asymp_{x_1,x_2,...} g$ means that both $f \ll_{x_1,x_2,...} g$ and $f \gg_{x_1,x_2,...} g$ are true.
Sometimes we write the hidden Vinogradov symbol value $c(x_1, x_2, ...)$ explicitly 
while slightly abusing the notation by using the same symbol $c(x_1, x_2, ...)$ for actually different values throughout the paper.
For a matrix $M \in \R^{m \times n} = (a_{ij})$ define its permanent by 
\begin{gather*}
	\perm M = \left\{ 
	\begin{array}{l}
		\underset{\sigma \in \mathrm{P}(n,m)}{\sum}	a_{1 \sigma(1)} \cdot ... \cdot a_{m \sigma(m)}, \mbox{  if  } m \le n,\\
		\perm M^{T}, \mbox{  if  } m > n,
	\end{array}
	\right.
\end{gather*}
where $\mathrm{P}(n,m)$ is the set of all $m$-permutations of $\{1,...,n\}$.

For some set $D \subseteq \C$, integer $n \ge 1$ and 
real numbers $Q \ge 1$, $v \ge 0$
denote by $\PP[n][Q,v,D]$ 
the set of primitive irreducible polynomials $P \in \PP[=n][Q]$
with a positive leading coefficient,
having a root $\alpha \in D$ such that 
\begin{align} \label{P'<}
	\abs{P'\val{\alpha}} < Q^{1-v}.
\end{align}
By definition, the set $\PP[n][Q,v,D]$ contains only polynomials which are minimal for some algebraic numbers,
therefore, by counting the elements of $\PP[n][Q,v,D]$ 
we essentially count algebraic numbers with certain properties.

In this paper we consider only algebraic numbers in the interval $I_0 = \val{-\frac{1}{2},\frac{1}{2}}$.
Previously an upper bound similar to \eqref{Pn(H,v)<} was obtained for a slightly wider range of $v$.

\begin{theorem}[{\cite{LimDerAtRootUpper32}}] \label{th:upper32}
	For $n \ge 1$
	there exist $c_1(n) > 0$ and $Q_0(n)>0$ such that
	for any $Q > Q_0(n)$ and for all $0 \le v \le \frac{3}{2}$ we have
	\begin{align*}
	        \# \PP[n][Q,v,I_0] \le c_1(n) Q^{n+1-v}.
	    \end{align*}
\end{theorem}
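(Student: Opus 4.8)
The plan is to count algebraic numbers rather than polynomials: each $P\in\PP[n][Q,v,I_0]$ is the minimal polynomial of a unique $\alpha=\alpha_P\in I_0$, so it suffices to bound the number of admissible $\alpha$. For $n=1$ this is elementary --- the hypothesis forces $|a_1|=|P'(\alpha)|<Q^{1-v}$, giving $\asymp Q^{2-2v}$ such polynomials for $v\le1$ and none for $v>1$ --- so I would assume $n\ge 2$. The starting point is the behaviour of $P$ near its root: since $H(P)\le Q$ and $\alpha\in I_0$ we have $|P^{(k)}(\alpha)|\ll_n Q$ for all $k$, so Taylor's formula at $\alpha$ (using $P(\alpha)=0$ and $|P'(\alpha)|<Q^{1-v}$) gives, for $|x-\alpha|\le1$,
\[
	|P(x)|\ll_n Q^{1-v}|x-\alpha|+Q|x-\alpha|^2,\qquad |P'(x)|\ll_n Q^{1-v}+Q|x-\alpha|.
\]
Hence there is an interval $\sigma(P)$ centred at $\alpha$ on which $|P(x)|<Q^{-n}$, of length $\gg_n Q^{v-n-1}$: when $|P'(\alpha)|$ is not too small the linear term dominates and gives exactly this, and when $|P'(\alpha)|$ is very small the quadratic term takes over but still yields length $\gg_n Q^{-(n+1)/2}\ge Q^{v-n-1}$ since $v\le\frac{3}{2}\le\frac{n+1}{2}$; likewise $|P'(x)|\ll_n Q^{1-v}$ on a subinterval of length $\asymp Q^{-v}$ around $\alpha$.

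Next I would apply the classical essential/inessential domains dichotomy to the family $\{\sigma(P)\}$: call $\sigma(P)$ \emph{inessential} if at least half of it is covered by the other $\sigma(P')$, and \emph{essential} otherwise. For the essential polynomials the $\sigma(P)$ are essentially disjoint inside a fixed bounded interval, so
\[
	\#\{P:\sigma(P)\text{ is essential}\}\cdot\min_P\mu_1(\sigma(P))\ll_n\mu_1\bigl((-1,1)\bigr)\ll1,
\]
and since $\mu_1(\sigma(P))\gg_n Q^{v-n-1}$ this part of the count is $\ll_n Q^{n+1-v}$, as wanted.

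The inessential polynomials are handled through the resultant and the Sylvester matrix. If $\sigma(P_1)$ is inessential it meets some $\sigma(P_2)$ with $P_2\neq P_1$, so there is $x_0\in I_0$ with $|P_i(x_0)|<Q^{-n}$ for $i=1,2$, and since $x_0$ lies within $\asymp Q^{-v}$ of $\alpha_{P_i}$ also $|P_i'(x_0)|\ll_n Q^{1-v}$. Writing $\mathrm{Res}(P_1,P_2)=\det S$ for the Sylvester matrix $S$ and performing column operations that replace its last column by the vector with entries $x_0^{j}P_1(x_0)$ and $x_0^{j}P_2(x_0)$ ($0\le j<n$), expansion along that column bounds $|\mathrm{Res}(P_1,P_2)|$ by a sum of at most $2n$ cofactors --- each a product of at most $2n-1$ coefficients, hence $\ll_n Q^{2n-1}$ --- times $\max(|P_1(x_0)|,|P_2(x_0)|)$, with a further saving available from the simultaneous smallness of the $P_i'(x_0)$. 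Since $P_1\neq P_2$ are irreducible of degree $n$ they are coprime, so $\mathrm{Res}(P_1,P_2)$ is a nonzero integer and $|\mathrm{Res}(P_1,P_2)|\ge1$; comparing the two bounds rules out clustering on sufficiently short intervals and, on longer ones, limits how many inessential $\sigma(P)$ can meet a given short interval. Summing these contributions over a covering of $I_0$ by intervals whose length is chosen to balance against this threshold shows the inessential part of the count is also $\ll_n Q^{n+1-v}$, and combining the two cases gives the theorem.

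I expect the inessential case, together with the range $v\le\frac{3}{2}$, to be the main obstacle. The direct alternative --- cover $I_0$ by intervals of length $\asymp Q^{-v}$, pass on each to the Taylor coordinates of $P$ at the centre (a unimodular change of the coefficient vector), count integer polynomials whose $0$th and $1$st Taylor coefficients are forced to be $\ll_n Q^{1-2v}$ and $\ll_n Q^{1-v}$ by counting lattice points in the resulting parallelepiped, and multiply by the $\asymp Q^{v}$ intervals --- only gives the bound for $v\le1$, because once $v>1$ the boxes are thin and the perimeter terms of the lattice count dominate. Getting from $v=1$ up to $v=\frac{3}{2}$ is exactly where one must exploit that clustered polynomials are redundant --- the content of the lemma on the order of approximation of a zero by irreducible divisors of integer polynomials from Gelfond's monograph, used here in its classical form. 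Making the exponents in the Sylvester-determinant estimate sharp enough, and tracking the interplay between the interval length, the clustering threshold and the dyadic ranges of $|P'(\alpha)|$, is the delicate bookkeeping on which the argument rests.
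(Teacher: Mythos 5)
First, note that the paper does not prove Theorem~\ref{th:upper32} at all: it is quoted from \cite{LimDerAtRootUpper32}, so there is no internal proof to compare against and your argument has to stand on its own. Its first half does stand: the $n=1$ case, the Taylor estimates, the intervals $\sigma(P)$ of length $\gg_n Q^{v-n-1}$ on which $|P|<Q^{-n}$ (the needed inequality $2v\le n+1$ holds for $n\ge2$, $v\le\frac32$), and the essential/inessential dichotomy with the disjoint-halves argument giving $\ll_n Q^{n+1-v}$ essential polynomials are all correct.

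The gap is the inessential case, which is the heart of the theorem and is only gestured at. From a common point $x_0\in\sigma(P_1)\cap\sigma(P_2)$ you only know $|P_i(x_0)|<Q^{-n}$ and $|P_i'(x_0)|\ll_n Q^{1-v}$, and the strongest bound these data can give through the Sylvester determinant is the two-column estimate $1\le|R(P_1,P_2)|\ll_n \max\curly{|P_1(x_0)|^2,\;|P_1(x_0)P_2(x_0)|,\;|P_2(x_0)|^2,\;|P_1(x_0)P_2'(x_0)|,\;|P_1'(x_0)P_2(x_0)|}\,Q^{2n-2}\ll_n Q^{n-1-v}$; your one-column manipulation is weaker still ($\ll_n Q^{n-1}$), and no ``further saving from the simultaneous smallness of the $P_i'(x_0)$'' is available, because in a $2\times2$ minor of the first two columns a product $P_1'P_2'$ never occurs without an accompanying value $P_i(x_0)$, and from the third column on the entries are only $O(Q)$. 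Since $Q^{n-1-v}\to\infty$ for every $n\ge3$, $v\le\frac32$, pairwise coprimality does \emph{not} ``rule out clustering on sufficiently short intervals'', no matter how short; nor can it, since clustering genuinely occurs (compare Theorem~\ref{th:lower}): the statement to be proved is a count, not an impossibility. Shrinking $\sigma(P)$ to make $|P_i(x_0)|$ small enough for a contradiction destroys the essential-domain count, and the proposed ``balance over a covering'' is never specified, so the inessential part of the bound is simply not established. What is missing is the actual mechanism used in this circle of results: pigeonhole the polynomials attached to one interval of length $\asymp Q^{-v}$ so that their top coefficients agree, pass to the difference, which has much smaller height (and possibly smaller degree), and only then apply a Gelfond/Bernik-type irreducible-divisor lemma or an interval Sylvester estimate such as \eqref{lm:sylvester:cols3interval} of Lemma~\ref{lm:sylvester} to that small-height polynomial (this is exactly the route taken in the proof of Theorem~\ref{th:main} via Statement~\ref{th:main:statRm}). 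You name Gelfond's lemma as the needed ingredient but never say to which polynomials it is applied or with what exponents, so the decisive step of the proof is absent.
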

\noindent Also a lower bound was obtained.
For technical reasons we need to replace condition \eqref{P'<} with the following:
\begin{align} \label{P'<chi}
	\abs{P'\val{\alpha}} < C_D Q^{1-v},
\end{align}
for some $C_D > 0$. Denote by $\PP[n][Q,v,D,C_D]$ the set of polynomials similar to $\PP[n][Q,v,D]$,
but having the derivative values determined by \eqref{P'<chi} instead of \eqref{P'<}.
\begin{theorem}[{\cite{LimDerAtRootLower}}] \label{th:lower}
	For $n \ge 2$
	there exist $c_1(n) > 0$, $Q_0(n)>0$ and $C_D(n) > 0$ such that
	for any $Q > Q_0(n)$ and for all $0 \le v \le \frac{n+1}{3}$ we have	
	\begin{align*}
		\# \PP[n][Q,v,D,C_D] \ge c_1(n) Q^{n+1-2v}.
	\end{align*}
\end{theorem}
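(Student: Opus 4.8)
The plan is to exhibit explicitly a family of at least $\gg_n Q^{n+1-2v}$ primitive irreducible polynomials lying in $\PP[n][Q,v,D,C_D]$. The mechanism I would use is to force each such $P$ to possess two genuine roots $\alpha_1,\alpha_2\in D$ with $|\alpha_1-\alpha_2|\asymp_n Q^{-v}$; the required bound $|P'(\alpha_1)|<C_DQ^{1-v}$ then drops out of the factorisation $P'(\alpha_1)=a_n\prod_{j\ge 2}(\alpha_1-\alpha_j)$, once the remaining conjugates are shown to be bounded. Concretely, I would fix an interior point $x_0$ of $D$ (say $x_0=0$ when $0\in D$) and a small constant $\varepsilon_0=\varepsilon_0(n,D)$, and consider integer polynomials $P(x)=a_nx^n+\dots+a_0$ with $a_n\in[Q/2,Q]$, $H(P)\le Q$, in which $P(x_0)$ and $P'(x_0)$ are restricted to short intervals (of lengths comparable to suitable powers of $Q$ and of $C_D$) chosen so that, writing $q(t)=P(x_0)+P'(x_0)t+\tfrac12P''(x_0)t^2$ for the quadratic Taylor jet of $P$ at $x_0$, one has (a) $|P(x_0)|,|P'(x_0)|\le\varepsilon_0 Q$ together with $P(x_0)P''(x_0)<0$, and (b) the discriminant $P'(x_0)^2-2P''(x_0)P(x_0)$ of $q$ is positive and of order $C_D^2Q^{2-2v}$.

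Condition (a) makes the two real roots $t_\pm$ of $q$ lie within $\varepsilon_0$ of $0$ and renders the cubic-and-higher part of the Taylor expansion of $P$ at $x_0$ negligible at that scale, so that Rouché's theorem produces genuine roots $\alpha_1,\alpha_2$ of $P$ lying within a fixed power of $Q$ below $Q^{-v}$ of $x_0+t_\pm$, in particular with $\alpha_1,\alpha_2\in D$; condition (b) forces $|t_+-t_-|=2\bigl(P'(x_0)^2-2P''(x_0)P(x_0)\bigr)^{1/2}/|P''(x_0)|\asymp_n Q^{-v}$, hence $|\alpha_1-\alpha_2|\asymp_n Q^{-v}$. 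To estimate $|P'(\alpha_1)|$ I would use $P'(\alpha_1)=a_n(\alpha_1-\alpha_2)\prod_{j\ge 3}(\alpha_1-\alpha_j)$; since $H(P)\le Q$ and $a_n\asymp Q$, the remaining conjugates satisfy $\max(1,|\alpha_j|)\ll 1$ by a Mahler-type bound, so $\prod_{j\ge 3}|\alpha_1-\alpha_j|\ll_n 1$ and therefore $|P'(\alpha_1)|\ll_n Q\cdot Q^{-v}=Q^{1-v}$, which is \eqref{P'<chi} with an admissible $C_D=C_D(n)$.

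For the count, the coefficients $a_2,\dots,a_n$ are free subject only to $H(P)\le Q$ and $a_n\in[Q/2,Q]$, contributing $\asymp Q^{n-1}$ choices; for each such choice the restrictions on $P(x_0)$ and $P'(x_0)$, together with the sign condition, are to be arranged so as to leave of order $Q^{2-2v}$ admissible integer pairs $(a_0,a_1)$, yielding a family of size $\asymp_n Q^{n+1-2v}$. The hypothesis $v\le\frac{n+1}{3}$ is precisely what keeps the exponents appearing in the argument non-negative, so that the coefficient ranges in question genuinely contain integers, and $Q>Q_0(n)$ absorbs the error terms. It then remains to discard the imprimitive and reducible members: imprimitivity is killed by fixing $a_2$ in a residue class modulo a fixed prime, while the sign condition forces $P(x_0)\ne 0$ and so excludes small linear factors; a reducible $P$ carrying two $Q^{-v}$-close roots in $D$ must have a proper factor that itself carries such a pair (or a multiple root), and a degree-and-height bookkeeping shows the reducible members number $O_n(Q^{n+1-2v-\delta})$ for some $\delta(n)>0$.

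I expect the decisive difficulty to be the interaction of the root-existence step with the counting step: the lengths of the intervals containing $P(x_0)$ and $P'(x_0)$ must be calibrated so delicately that simultaneously (i) Rouché's theorem applies uniformly over the whole family, i.e.\ the quadratic jet dominates the tail at the relevant scale, (ii) the two resulting close roots are real and lie inside $D$, and (iii) the number of admissible low-order coefficient vectors is genuinely of order $Q^{n+1-2v}$ and not smaller — the naïve choices readily give only the weaker exponent $Q^{n+1-3v}$, so recovering the stated bound is the crux. Checking that the reducible and imprimitive exceptions do not absorb this count, and that the constant $C_D(n)$ can be chosen uniformly, are further, more routine, obstacles.
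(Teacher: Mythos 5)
The paper does not actually prove Theorem \ref{th:lower} (it is imported from \cite{LimDerAtRootLower}), so your proposal has to be judged against the standard construction behind such lower bounds; its general mechanism (force two roots of $P$ at distance $\asymp Q^{-v}$ in $D$, so that $|P'(\alpha_1)|\ll_n |a_n|\,|\alpha_1-\alpha_2|\ll_n Q^{1-v}$) is the right one, but your single-point calibration cannot deliver the exponent $n+1-2v$, and this is a genuine gap rather than a technicality. Note that your conditions (a) and (b) secretly collapse to the ``naive'' ones: since $P(x_0)P''(x_0)<0$ makes both terms of $P'(x_0)^2-2P''(x_0)P(x_0)$ nonnegative, the window ``discriminant $\asymp C_D^2Q^{2-2v}$'' forces $|P'(x_0)|\ll C_DQ^{1-v}$ and, granted $|P''(x_0)|\asymp Q$ (an extra hypothesis you never impose and which does not follow from $a_n\in[Q/2,Q]$), also $|P(x_0)|\ll C_D^2Q^{1-2v}$. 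Hence the admissible $(a_0,a_1)$-region for fixed $a_2,\dots,a_n$ has area $\asymp Q^{2-3v}$, not $Q^{2-2v}$, and the whole family has size at best $\asymp Q^{n+1-3v}$ --- exactly the exponent you set out to beat. If instead you drop the sign condition so that $|P'(x_0)|$ may be as large as $\varepsilon_0Q$ with near-cancellation in the discriminant, then the close pair of jet roots sits at distance up to a constant from $x_0$, where the cubic tail is of size $\asymp Q\varepsilon_0^{3}\gg Q^{1-2v}$, while on circles of radius $r\le\frac14|t_+-t_-|\asymp Q^{-v}$ around $t_\pm$ the jet is only $\asymp Q\,r\,Q^{-v}\le Q^{1-2v}$; Rouch\'e fails and nothing is learned about the actual roots of $P$. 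Requiring the tail to be negligible caps the usable area at $O(Q^{2-2v-2v/3})$, still short of $Q^{2-2v}$ by a power of $Q$. Moreover, even where the area is right, ``area $=$ number of integer pairs'' is false in the relevant range: for $v>\frac12$ (and the theorem reaches $v=\frac{n+1}{3}\ge1$) the region is a strip of width $\asymp Q^{1-2v}<1$ in the $a_0$-direction, so most choices of the top coefficients admit no pair at all, and with $x_0=0$ the average count becomes an equidistribution problem for $a_1^2\bmod 4a_2$ that you do not address.

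The missing idea, which is how the cited proof and the related construction of close conjugate algebraic numbers \cite{BBG10-comp} proceed, is to let the location of the close root pair vary: partition $D$ into $\asymp Q^{v}$ intervals of length $Q^{-v}$ with midpoints $x_J$, and for each $x_J$ produce $\gg_n Q^{n+1-3v}$ polynomials with $|P(x_J)|\ll Q^{1-2v}$, $P'(x_J)$ of definite order $Q^{1-v}$ and $|P''(x_J)|\ll Q$ (so that a sign change, not Rouch\'e at a constant distance, yields a real root $\alpha$ near $x_J$ with $|P'(\alpha)|\ll_n Q^{1-v}$). Because the corresponding convex body in all $n+1$ coefficients has volume $\asymp Q^{n+1-3v}$, this per-interval count requires a Minkowski--van der Corput type lattice-point argument, and the hypothesis $v\le\frac{n+1}{3}$ is precisely the volume condition keeping it $\gg_n1$, with $C_D(n)$ absorbing the constants --- not merely ``non-negativity of exponents'' as you suggest. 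Since a degree-$n$ polynomial can be attached to $\ll_n1$ of the intervals, summation gives $\gg_n Q^{v}\cdot Q^{n+1-3v}=Q^{n+1-2v}$. Finally, your disposal of reducible polynomials is flawed: a reducible $P$ may carry the two close roots in \emph{different} irreducible factors, so no proper factor need contain such a pair, and no bound $O_n(Q^{n+1-2v-\delta})$ is actually derived; the standard remedy is to build Eisenstein-type congruence conditions (and the normalization of $a_n$ ensuring degree exactly $n$ and positive leading coefficient) into the coefficient ranges from the start, at the cost of constants only.
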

In Theorem \ref{th:upper32} the range of $v$ doesn't depend on $n$, 
which significantly limits the applications for large values of $n$.
We prove the following upper bound, thus partially addressing this issue.
\begin{theorem} \label{th:main}
	For $n \ge 3$
	there exist $c_1(n) > 0$ and $Q_0(n)>0$ such that 
	for any $Q > Q_0(n)$ and for all $1.4 \le v \le \frac{7}{16}(n+1)$
	the following estimate holds:	
	\begin{equation} \label{th:main:stat}
		\# \PP[n][Q,v,I_0] \le c_1(n) Q^{n+1-v\ga},
	\end{equation}				
	with $\ga = \frac{1}{7}$.
\end{theorem}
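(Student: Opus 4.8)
The plan is to combine a dyadic decomposition of $\PP[n][Q,v,I_0]$ with a resultant estimate for pairs of its polynomials whose distinguished roots are close — the estimate being sharpened through the Sylvester-matrix representation of the resultant of the \emph{shifted} polynomials — and to feed the improved lower bound for $\abs{D(x)}$ at points far from low-height algebraic numbers into a covering argument.

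\textbf{Step 1 (reduction to a class).} For $P \in \PP[n][Q,v,I_0]$ with distinguished root $\al \in I_0$, order the other roots $\al_2,\dots,\al_n$ by increasing distance from $\al$ and fix the dyadic sizes of $\abs{a_n}$, of $\abs{\al-\al_j}$ for $2 \le j \le n$, and of $\abs{P'\val{\al}}$; since $\abs{P'\val{\al}} = \abs{a_n}\prod_{j\ge 2}\abs{\al-\al_j}$ these data are consistent, and the number of admissible tuples is $\ll_n (\log Q)^{c(n)}$. It suffices to bound, for each fixed tuple, the cardinality $N$ of the corresponding subclass; the logarithmic loss is removed at the end either by keeping the tuple parameters free throughout the optimization (so the relevant sums are geometric rather than having $\log Q$ terms) or by using the strict gap between the exponent actually obtained and $n+1-v\ga$. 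One then separates the regime in which the nearest root is genuinely close, say $\abs{\al-\al_2} \le Q^{-\eta}$, from its complement; in the complement the relation $\abs{P'\val{\al}} \asymp_n \abs{a_n}\,\abs{\al-\al_2}^{\,n-1}$ forces $\abs{a_n}$, and hence all of $H(P)$, to be small, and the subclass is counted by volume.

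\textbf{Step 2 (shifted resultant estimate).} Let $P \ne R$ lie in the fixed subclass, with distinguished roots $\al,\beta \in I_0$ and $\rho = \abs{\al-\beta}$. Being distinct and irreducible, $P$ and $R$ are coprime, so $\mathrm{Res}(P,R) \in \Z\setminus\{0\}$ and $\abs{\mathrm{Res}(P,R)} \ge 1$. By translation invariance $\mathrm{Res}(P,R) = \mathrm{Res}\bigl(P(x+\al),R(x+\al)\bigr)$, and I would expand the latter as the determinant of the Sylvester matrix of the shifted polynomials. There the rows coming from $P(x+\al) = P'\val{\al}\,x + \tfrac12 P''\val{\al}\,x^2 + \cdots$ have zero constant term and linear coefficient of modulus $< Q^{1-v}$, while the rows coming from $R(x+\al)$ have constant term $R\val{\al} = R\val{\al} - R\val{\beta}$, of modulus $\ll_n \abs{R'\val{\beta}}\,\rho < Q^{1-v}\rho$ once the higher Taylor terms are absorbed (this is where $\rho$ is taken small). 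Laplace expansion along these distinguished small entries, with the remaining minors estimated via the dyadic parameters of the subclass rather than crudely by powers of $Q$, gives a bound of the shape
\begin{equation*}
	1 \;\le\; \abs{\mathrm{Res}(P,R)} \;\ll_n\; Q^{\,\theta}\cdot Q^{1-v}\cdot Q^{1-v}\rho ,
\end{equation*}
with $\theta$ an explicit linear function of the subclass parameters that is strictly smaller than the trivial value $2n-2$; hence the distinguished roots of distinct polynomials in the subclass are $\delta$-separated with $\delta \gg_n Q^{-\theta-2+2v}$.

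\textbf{Step 3 (divisors, covering, optimization, and the main obstacle).} The improved Gelfond lemma is used to control the reducible auxiliary polynomials that appear — products $PR$, combinations vanishing to high order at several of the $\al$'s, or the polynomials entering the resultant estimate — by guaranteeing that any irreducible divisor $D$ either has height $\ll_n Q$ and a root near the point under consideration, or else has $\abs{D(x)}$ so large at a point $x \in I_0$ chosen to avoid all algebraic numbers of degree $\le n$ and height below a suitable power of $Q$ that $D$ cannot contribute; this forces the factorizations to be essentially trivial, and it is exactly here that the sharper lower bound for $\abs{D(x)}$ — again via the shifted-Sylvester form of $\mathrm{Res}(D,\cdot)$ — improves on the classical estimate. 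With this, $N$ is bounded by a covering argument: the $N$ distinguished roots are $\delta$-separated and lie in $I_0$, so $N \ll \delta^{-1}$ in the close regime; combining with the volume bound of Step 1 in the complementary regime and optimizing over $\eta$ and the dyadic parameters yields $N \ll_n Q^{\,n+1-v\ga}$, the optimum producing $\ga = \tfrac17$, while the hypotheses $1.4 \le v$ and $v \le \tfrac{7}{16}(n+1)$ are precisely the conditions under which the close-roots case is binding and all intermediate exponents — the number of covering intervals, the heights and degrees of the divisors, the positivity of $n+1-v\ga$ — stay in the admissible ranges. The crux is Step 2 together with this optimization: squeezing from the shifted Sylvester determinant the smallest possible power of $Q$ multiplying $\rho$ (the whole source of the improvement), and then carrying out the multi-parameter optimization cleanly enough that the logarithmic factors disappear and the final exponent is exactly $n+1-\tfrac17 v$ on the stated range.
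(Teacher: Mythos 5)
The decisive gap is in Step 2. In the Sylvester matrix of the shifted polynomials, every nonvanishing term of the determinant is forced to contain \emph{exactly two} small factors and no more: column~$1$ can only be matched with $R(\al)$ (matching it with $P(\al)=0$ kills the term), column~$2$ with one of $P'(\al)$, $R(\al)$, and from column~$3$ onward the available entries are the higher Taylor coefficients $\tfrac{1}{j!}P^{(j)}(\al)$, $\tfrac{1}{j!}R^{(j)}(\al)$ with $j\ge 2$, which for typical members of the class (height $\asymp Q$, the remaining roots at bounded distance from $\al$) are genuinely of size $\asymp Q$ and are not made smaller by fixing your dyadic parameters. So the pairwise resultant inequality can give at best $1 \ll_n Q^{2n-2}\abs{P'(\al)}\abs{R(\al)} \ll_n Q^{2n-2}\,Q^{1-v}\cdot Q^{1-v}\rho$, i.e. $\rho \gg_n Q^{2v-2n}$, and the covering bound of Step 3 then yields only $N \ll_n Q^{2n-2v}$. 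This exceeds the target $Q^{n+1-v/7}$ whenever $v < \tfrac{7}{13}(n-1)$, which covers essentially the whole stated range $1.4 \le v \le \tfrac{7}{16}(n+1)$ (all of it once $n\ge 10$). The claimed replacement of the ``trivial'' $\theta = 2n-2$ by something much smaller would have to save a factor of order $Q^{n}$, and nothing in your argument produces such a saving; a pairwise-separation-plus-covering strategy is structurally unable to reach the exponent $n+1-v\ga$ here, because the resultant of two polynomials simply does not see enough smallness. (Also, in Step 1 the relation $\abs{P'(\al)} \asymp_n \abs{a_n}\abs{\al-\al_2}^{n-1}$ is not valid in general, so the complementary regime is not disposed of by a volume count as stated.)

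What is missing is the collective mechanism that the paper actually uses. There, one pigeonholes $I_0$ into intervals of length $Q^{-v}$ and finds a special interval $K_0$ containing $\gg_n Q^{n+1-v(1+\ga)+\rho}$ polynomials of the class, together with a subset $B_0 \subseteq K_0$ of points staying far from all algebraic numbers of degree $\le n$ and height $\le Q^{1+2\D}$; then, subtracting pairs of these polynomials with nearly equal higher coefficients, one manufactures for every $m_0 \le m \le n$ an auxiliary polynomial $R_m$ of degree $\le m$ and height $\le Q^{\la}$ with $\la = \frac{v(1+\ga)-2-\rho}{m-1}$ satisfying $\abs{R_m(\xi)} < Q^{1-2v+\D}$ on $K_0$. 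It is to these low-height, very small auxiliary polynomials (not to the original $P$'s pairwise) that the improved Gelfond-type divisor statements are applied, with the shifted-Sylvester lemma and the derivative control on $B_0$ yielding the strengthened exponent with the $\tfrac12$ factors; all the $R_m$ are then shown to share a single irreducible-power divisor $t_e$ of low degree and height which is extremely small on $K_0$, and a final application of the three-column estimate shows $t_e$ would have to share a root with every $P \in \PP[n][Q,v,K_0]$ --- a contradiction. The constants $\ga = \tfrac17$ and $v \le \tfrac{7}{16}(n+1)$ arise from balancing these inequalities, not from optimizing a pairwise separation; your Step 3 invokes the divisor lemma and ``optimization'' but supplies neither the construction of the low-height auxiliary polynomials nor any mechanism from which $\tfrac17$ could emerge, so the proposal does not prove the theorem.
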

In the core of the method we use in the proof
is an improvement to one lemma from
A. Gelfond's monograph "Transcendental and algebraic numbers".
\begin{lemma}[{\cite[Chapter 3, §4, Lemma VI]{Gelfond}}]
	Let $P \in \PP[\le m][Q]$, $m \in \N$, $Q \ge 1$, be a primitive polynomial.
	If we have $\abs{P(\xi)} < Q^{-\tau}$, $\tau>6m$, in some transcendental point $\xi \in \R$,
	then there exists a divisor $t_1(x)$ of $P(x)$, which is a power of some irreducible integer polynomial,
	such that for all $Q>Q_0(m,\xi)$ we have
	\begin{align*}
		\abs{t_1(\xi)} < Q^{-\tau+6m}.
	\end{align*}
\end{lemma}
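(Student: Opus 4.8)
The plan is to factor the primitive polynomial $P$ into powers of distinct irreducible primitive polynomials, $P(x) = a\, t_1(x)^{k_1} \cdots t_s(x)^{k_s}$ with $a \in \Z$, and to show that the smallness of $|P(\xi)|$ must be "concentrated" in one of these irreducible factors. First I would recall the standard comparison between the naive height of a product and the product of heights (Gelfond's inequality): $\prod_j H(t_j)^{k_j} \le e^{m} H(P) \le e^m Q$, so each factor satisfies $H(t_j) \le e^m Q$, and the leading-coefficient constant $a$ divides the leading coefficient of $P$. The key point will then be to bound from below $|t_j(\xi)|$ for those factors that are \emph{not} the distinguished small one: if $\xi$ is transcendental then $t_j(\xi) \ne 0$, but we need a quantitative lower bound, and this is exactly where one uses that $Q$ is large relative to $m$ and $\xi$ — the hypothesis $Q > Q_0(m,\xi)$.

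The main device, as the abstract hints, is the resultant. For two of the irreducible factors, say $t_i$ and $t_j$ with $i \ne j$, the resultant $\mathrm{Res}(t_i,t_j)$ is a nonzero integer, hence $|\mathrm{Res}(t_i,t_j)| \ge 1$. On the other hand, writing the resultant via the Sylvester matrix of the \emph{shifted} polynomials $t_i(x+\xi)$ and $t_j(x+\xi)$ (whose constant terms are $t_i(\xi)$ and $t_j(\xi)$), and expanding the determinant, one gets an upper bound of the shape $1 \le |\mathrm{Res}(t_i,t_j)| \ll_{m} \max(|t_i(\xi)|,|t_j(\xi)|)\cdot (H(t_i)H(t_j)(1+|\xi|))^{O(m)}$. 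Since the heights are $\le e^m Q$, this forces $\max(|t_i(\xi)|,|t_j(\xi)|) \gg_{m,\xi} Q^{-O(m)}$; being careful with the exponents, one arranges that at most one factor $t_j$ can have $|t_j(\xi)|$ below the threshold $Q^{-\tau + 6m}$, while all the others are $\ge Q^{-6m+\varepsilon}$ in absolute value (for $Q$ large). I would also need the trivial upper bound $|t_j(\xi)| \le H(t_j)(1+|\xi|)^m \ll_{m,\xi} Q$ going the other direction to control the product.

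With that in hand, the argument closes as follows. From $|P(\xi)| = |a|\prod_j |t_j(\xi)|^{k_j} < Q^{-\tau}$ and the lower bounds $|t_j(\xi)| \gg_{m,\xi} Q^{-6m + \varepsilon}$ for every $j$ except possibly one distinguished index $j_0$, together with $\sum_j k_j \le m$ and $|a| \ge 1$, we get
\begin{align*}
	|t_{j_0}(\xi)|^{k_{j_0}} \le |a|^{-1}\prod_{j \ne j_0} |t_j(\xi)|^{-k_j} \cdot |P(\xi)| \ll_{m,\xi} Q^{(6m-\varepsilon)(m - k_{j_0})}\, Q^{-\tau},
\end{align*}
and since $k_{j_0} \ge 1$ and $\tau > 6m$, taking $k_{j_0}$-th roots and absorbing constants into $Q_0(m,\xi)$ yields $|t_{j_0}(\xi)| < Q^{-\tau + 6m}$, so $t_1(x) := t_{j_0}(x)^{k_{j_0}}$ — a power of an irreducible integer polynomial dividing $P$ — is the desired divisor. (One checks the exponent bookkeeping is what produces exactly $6m$; the looser constant $6m$ rather than something sharper is the price of the crude Sylvester-determinant estimate.)

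The step I expect to be the main obstacle is the quantitative resultant lower bound for the non-distinguished factors: one must extract from $|\mathrm{Res}(t_i,t_j)| \ge 1$ a clean lower bound on $|t_j(\xi)|$ in terms of $Q$ only, with an \emph{explicit and small} exponent ($6m$), and this requires choosing the Sylvester-matrix expansion so that exactly one column (carrying the constant terms $t_i(\xi)$, $t_j(\xi)$ after shifting by $\xi$) produces the small factor while the remaining determinant minor is bounded polynomially in the heights — the bookkeeping of which minors appear and how the degrees of $t_i,t_j$ enter is the delicate part. Everything else (Gelfond's height inequality, the trivial upper bound on $|t_j(\xi)|$, absorbing finitely many constants depending on $m$ and $\xi$ into the threshold $Q_0(m,\xi)$) is routine.
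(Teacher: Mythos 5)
Your toolkit (Gauss-style factorization of the primitive $P$ into powers of distinct irreducible primitive polynomials, Gelfond's height-of-product inequality, and the bound $1\le|\mathrm{Res}(t_i,t_j)|$ combined with a Sylvester-determinant upper bound at the shifted point) is the classically correct one; note the paper itself quotes this lemma from Gelfond without reproving it. But the quantitative core of your argument fails. You bound \emph{every} non-distinguished irreducible factor from below via a pairwise resultant estimate, each such bound costing a power $Q^{-O(m)}$, and then divide $|P(\xi)|<Q^{-\tau}$ by the product of these lower bounds. Since the complementary part of $P$ can have total degree (with multiplicity) up to $m-1$, that product is of size $Q^{-O(m^{2})}$, not $Q^{-O(m)}$: your own display exhibits the loss $(6m-\varepsilon)(m-k_{j_0})$, which is quadratic in $m$. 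Already for $m=3$, $k_{j_0}=1$ it gives $|t_{j_0}(\xi)|^{k_{j_0}}\ll Q^{36-\tau}$, far weaker than the required $Q^{6m-\tau}=Q^{18-\tau}$, and no bookkeeping of Sylvester minors can compress a sum of up to $m-1$ pairwise losses, each of size about $m$, into a single loss $6m$. The closing step is also faulty on its own: the divisor you name is $t_{j_0}^{k_{j_0}}$, so the quantity to bound is $|t_{j_0}(\xi)|^{k_{j_0}}$ itself and no $k_{j_0}$-th root should be taken; and even if you settle for the divisor $t_{j_0}$, dividing the exponent by $k_{j_0}$ does not yield the target, since for $k_{j_0}\ge 2$ the rooted bound is roughly $Q^{-(\tau-6m(m-k_{j_0}))/k_{j_0}}$, which for large $\tau$ behaves like $Q^{-\tau/k_{j_0}}$ and is much larger than $Q^{-\tau+6m}$.

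The missing idea is an \emph{iterative two-factor splitting with telescoping losses}, which is how Gelfond's proof (and, in sharper form, Lemma 14 of \cite{Bern83}, \cite{KudinGelfond}, and Statements \ref{th:main:statDIV}--\ref{th:main:statDIVI} of this paper) actually proceeds. Write $P=s_1s_2$ with $s_1,s_2$ coprime integer divisors; a single application of the resultant/Sylvester bound in \emph{normalized} form shows that the factor with the larger value of $|s_i(\xi)|/H(s_i)$ satisfies $|s_i(\xi)|/H(s_i)\gg_m Q^{-(n_1\lambda_2+n_2\lambda_1)}$ (degrees $n_i$, heights $Q^{\lambda_i}$), so the other factor inherits an inequality of the same shape as the hypothesis with $\tau$ reduced only by about $n_1\lambda_2+n_2\lambda_1$. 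If that small factor is not yet a power of a single irreducible, split it again and repeat; the degree strictly decreases, so the recursion terminates at some $t_1(x)=p(x)^{e}$, and the total loss is the sum of the cross terms $n_i\lambda_j+n_j\lambda_i$ over the nested splits, bounded by roughly $\bigl(\sum_i n_i\bigr)\bigl(\sum_j \lambda_j\bigr)\ll m\lambda$ --- linear in $m$, which is exactly what produces a bound of the shape $6m$ (and $m\lambda$, resp.\ $\tfrac12 m\lambda$, in the refinements). Your one-shot ``all pairs'' scheme has no such telescoping, so it cannot reach the stated exponent for general $m$; repairing the proof means replacing that step by the recursion above.
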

\noindent
This lemma has been further improved by relaxing the condition on $\tau$ and obtaining stronger estimates for the absolute value of the divisor.
\begin{lemma}[{\cite[Lemma 14]{Bern83}}]
	Let $I \subset (-m,m)$, $m \in \N$, be an interval and
	let $P \in \PP[\le m][Q^{\la}]$, $\la \ge 0$, $Q \ge 1$, be a polynomial.
	If we have $\abs{P(\xi)} < Q^{-\tau}$, $\tau>3m\la$, for any point $\xi \in I$,
	then there exists a divisor $t_1(x)$ of $P(x)$, which is a power of some irreducible integer polynomial,
	such that for all $Q>Q_0(m)$ we have
	\begin{align*}
		\abs{t_1(\xi)} \ll_m Q^{-\tau+m\la} \,\, \forall \, \xi \in I.
	\end{align*}
\end{lemma}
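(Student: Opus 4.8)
The plan is to run the classical factorization argument behind Gelfond's lemma, but to feed it a sharper resultant estimate coming from the Sylvester matrix of the \emph{shifted} polynomials, and then to upgrade the resulting pointwise bound to a bound uniform over the whole interval $I$ by a connectedness argument. To set up, write $P = c(P)\, t_1(x)\cdots t_s(x)$, where $c(P)\in\N$ is the content and each $t_i = p_i^{k_i}$ is a power of a primitive irreducible polynomial $p_i$, the $p_i$ being pairwise non-proportional; put $e_i = \deg t_i$, so that $\sum_i e_i = \deg P \le m$. Gelfond's inequality (multiplicativity of the Mahler measure) gives $\prod_i H(t_i) \ll_m H(P) \le Q^{\la}$, hence $H(t_i) \ll_m Q^{\la}$ for every $i$; and $\prod_i \abs{t_i(\xi)} = \abs{P(\xi)}/c(P) \le \abs{P(\xi)}$ for all $\xi$.

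The estimate driving the whole proof is the following. For coprime $g,h\in\Z[x]$ and any $\xi\in(-m,m)$ the resultant $R(g,h)$ is invariant under translation, so it equals the Sylvester determinant of $g(\xi+y)$ and $h(\xi+y)$. In that matrix the last column has only two nonzero entries, the constant terms $g(\xi)$ and $h(\xi)$, while every other entry is either $0$ or a coefficient $g^{(j)}(\xi)/j!$ or $h^{(j)}(\xi)/j!$, which (since $\abs{\xi}<m$) is $\ll_m H(g)$, resp. $\ll_m H(h)$. Expanding the determinant along that column gives
\begin{align*}
	1 \le \abs{R(g,h)} \ll_m \abs{g(\xi)}\,H(g)^{\deg h - 1}H(h)^{\deg g} + \abs{h(\xi)}\,H(g)^{\deg h}H(h)^{\deg g - 1}.
\end{align*}
Taking $g = t_i$, $h = t_j$ with $i\ne j$ (coprime, as $p_i\ne p_j$), one deduces: if $\abs{t_i(\xi)}\le\abs{t_j(\xi)}$, then $\abs{t_j(\xi)}\gg_m H(t_i)^{-e_j}H(t_j)^{-e_i}$.

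Fix $\xi\in I$ and order the divisors so that $\abs{t_1(\xi)}\le\cdots\le\abs{t_s(\xi)}$. Multiplying the last bound over $j=2,\dots,s$ and using $\sum_{j\ge2}e_j\le m-e_1$, $\prod_{j\ge2}H(t_j)\le\prod_i H(t_i)\ll_m Q^{\la}$ and $H(t_1)\ll_m Q^{\la}$, one obtains $\prod_{j=2}^{s}\abs{t_j(\xi)}\gg_m Q^{-m\la}$, and therefore
\begin{align*}
	\min_i\abs{t_i(\xi)} = \abs{t_1(\xi)} \le \frac{\abs{P(\xi)}}{\prod_{j\ge2}\abs{t_j(\xi)}} \ll_m \frac{Q^{-\tau}}{Q^{-m\la}} = Q^{-\tau+m\la}.
\end{align*}
Hence at each $\xi\in I$ \emph{some} prime-power divisor already satisfies the asserted bound; it remains to show that a single divisor does so throughout $I$. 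Let $c_0=c_0(m)$ be the constant implicit in the last display and put $A_i=\{\xi\in I:\abs{t_i(\xi)}\le c_0 Q^{-\tau+m\la}\}$; these are closed subsets of $I$ whose union is $I$. If more than one $A_i$ were nonempty, connectedness of $I$ would force two of them, say $A_a$ and $A_b$ with $a\ne b$, to meet at some point $\xi^{*}$, where $\abs{t_a(\xi^{*})}$ and $\abs{t_b(\xi^{*})}$ are both $\le c_0 Q^{-\tau+m\la}$; substituting into the resultant estimate above with $g=t_a$, $h=t_b$ and bounding the height factors by $\bigl(\prod_i H(t_i)\bigr)^{e_a+e_b}\ll_m Q^{m\la}$ gives $1\ll_m Q^{-\tau+2m\la}$, impossible for $Q>Q_0(m)$ because $\tau>3m\la$. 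Therefore exactly one $A_{i_0}$ is nonempty, so $A_{i_0}=I$, and $t_1:=t_{i_0}=p_{i_0}^{k_{i_0}}$ is the required divisor.

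The main obstacle is precisely this passage from a pointwise to a uniform conclusion: Gelfond's lemma is only pointwise, and the whole improvement rests on the sharpened resultant estimate being strong enough that a ``crossover'' between two divisors on $I$ contradicts the hypothesis $\tau>3m\la$. Carrying out the Sylvester-matrix bookkeeping in that estimate with care — keeping track of exactly which height carries which exponent — is the delicate step; the factorization, Gelfond's inequality, and the topological argument are routine.
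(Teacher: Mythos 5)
The paper itself gives no proof of this lemma --- it is quoted verbatim from \cite{Bern83} --- so there is no in-text argument to compare yours with; what can be said is that your proof is assembled precisely from the tools this paper develops around the lemma: translation invariance of the resultant via the Sylvester matrix of the shifted polynomials is Lemma \ref{lm:TI2000}, your one-column expansion of that determinant is the $k=1$ case of \eqref{lm:sylvester:colsk} (which the authors note is Gelfond's Lemma V), and the height multiplicativity you use is Lemma \ref{lm:H1H2=H12}. The substance checks out: the factorization into prime powers $t_i$, the estimate $\abs{t_j(\xi)} \gg_m H(t_i)^{-e_j}H(t_j)^{-e_i}$ when $\abs{t_i(\xi)}\le\abs{t_j(\xi)}$, the pointwise conclusion $\min_i\abs{t_i(\xi)}\ll_m Q^{-\tau+m\la}$, and the upgrade to a single divisor on all of $I$ through the relatively closed sets $A_i$ and connectedness (two nonempty $A_a$, $A_b$ must meet, and a common point feeds back into the resultant bound) are all correct. (A cosmetic remark: in the convention of \eqref{lm:TI2000:sylvester} the column whose only nonzero entries are $g(\xi)$ and $h(\xi)$ is the first one, not the last; this changes nothing.)

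One step is asserted too casually. At the crossover point you obtain $1 \ll_m Q^{-\tau+2m\la}$ and declare this ``impossible for $Q>Q_0(m)$ because $\tau>3m\la$''. The exponent you actually gain is $\tau-2m\la>m\la$, and since the lemma allows any $\la\ge 0$, this can be arbitrarily close to $0$; no threshold $Q_0$ depending on $m$ alone then overcomes the implied constant, so as written the contradiction is not justified in the regime of very small $m\la$. The fix is short and worth adding: if $Q^{\tau-2m\la}\le c(m)$, then from $\tau>3m\la$ and $Q\ge 1$ one gets $Q^{m\la}\le c(m)$ and hence $Q^{\tau}\le c(m)^{3}$; consequently $Q^{-\tau+m\la}\ge Q^{-\tau}\gg_m 1$, while every divisor $t$ of $P$ satisfies $\abs{t(\xi)}\ll_m H(t)\ll_m H(P)\le Q^{\la}\ll_m 1$ for $\xi\in(-m,m)$ by Lemma \ref{lm:H1H2=H12}, so in this degenerate regime the conclusion holds trivially for any divisor. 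With that case distinction inserted, your argument is a complete and self-contained proof of the cited lemma.
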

\begin{lemma}[{\cite{KudinGelfond}}]
	Let $I \subset (-m,m)$, $m \in \N$, be an interval and
	let $P \in \PP[\le m][Q^{\la}]$, $\la \ge 0$, $Q \ge 1$, be a polynomial.
	If we have $\abs{P(\xi)} < Q^{-\tau}$, $\tau > 0$, for any point $\xi \in I$,
	then there exists a divisor $t_1(x)$ of $P(x)$, $\deg t_1=m_1$, $H(t_1)=Q^{\la_1}$,
	which is a power of some irreducible integer polynomial,
	such that for any $\delta > 0$ and for all $Q>Q_0(m,M,\delta)$ we have
	\begin{align*}
		\abs{t_1(\xi)} < Q^{- \tau + m\la - m_1\la_1 - (m-m_1)(\la-\la_1) + \delta}  \,\, \forall \, \xi \in I.
	\end{align*}
\end{lemma}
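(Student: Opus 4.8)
The plan is to pass to the primitive prime-power factorization of $P$ and then to localize the smallness of $P$ on $I$ to a single such factor by a resultant argument. Write $P = \pm\, t_1 \cdots t_s$, where each $t_j = p_j^{e_j}$ is a power of a distinct irreducible primitive integer polynomial and $d_j := \deg t_j \ge 1$; by Gauss's lemma $P$ is again primitive, so $\abs{P(\xi)} = \prod_j \abs{t_j(\xi)}$ for all $\xi$, and by Mahler's inequality $\prod_j H(t_j) \ll_m H(P) \le Q^{\la}$, so, writing $H(t_j) = Q^{\la_j}$, we have $\sum_j \la_j \le \la + O(1/\log Q)$ and $\sum_j d_j \le m$. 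The divisor $t_1$ of the conclusion will be one of the $t_j$.

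The technical core is the bound, valid for $i \ne j$ and every $\xi \in I$,
\[
	\max\bigl(\abs{t_i(\xi)},\abs{t_j(\xi)}\bigr)\;\gg_m\;H(t_i)^{-d_j}H(t_j)^{-d_i},
\]
which is the key improvement over Gelfond's original lemma. To prove it I would use that $\mathrm{Res}(t_i,t_j)$ is a nonzero integer, since the radicals $p_i,p_j$ are coprime, hence $\abs{\mathrm{Res}(t_i,t_j)}\ge 1$, together with the translation invariance $\mathrm{Res}(t_i,t_j)=\mathrm{Res}\bigl(t_i(x+\xi),t_j(x+\xi)\bigr)$. Writing the latter as the Sylvester determinant of the shifted pair and expanding along the column of constant terms — which equal $t_i(\xi)$ and $t_j(\xi)$ — gives $\mathrm{Res}(t_i,t_j)=t_i(\xi)M_1+t_j(\xi)M_2$ for two minors $M_1,M_2$; since $I\subset(-m,m)$ the entries of the shifted Sylvester matrix are $\ll_m H(t_i)$, resp. $\ll_m H(t_j)$, so $\abs{M_1},\abs{M_2}\ll_m H(t_i)^{d_j}H(t_j)^{d_i}$ and the displayed bound follows.

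With this in hand, fix $\xi\in I$ and let $t_1$ be a factor attaining $\min_j\abs{t_j(\xi)}$, with $m_1=\deg t_1$ and $\la_1=\log_Q H(t_1)$. Since $t_1$ is a minimizer, $\abs{t_k(\xi)}=\max\bigl(\abs{t_1(\xi)},\abs{t_k(\xi)}\bigr)\gg_m H(t_1)^{-d_k}H(t_k)^{-m_1}$ for each $k\ne 1$; multiplying over $k$ and applying Mahler's inequality to $U:=P/t_1$ yields $\abs{U(\xi)}=\prod_{k\ne 1}\abs{t_k(\xi)}\gg_m H(t_1)^{-\deg U}H(U)^{-m_1}$, whence $\abs{t_1(\xi)}=\abs{P(\xi)}/\abs{U(\xi)}\ll_m Q^{-\tau}H(t_1)^{\deg U}H(U)^{m_1}$. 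Using $\deg U\le m-m_1$ and $\log_Q H(U)\le\la-\la_1+O(1/\log Q)$, the exponent is at most $-\tau+(m-m_1)\la_1+m_1(\la-\la_1)+O(1/\log Q)$, and since $(m-m_1)\la_1+m_1(\la-\la_1)=m\la-m_1\la_1-(m-m_1)(\la-\la_1)$, this is $<-\tau+m\la-m_1\la_1-(m-m_1)(\la-\la_1)+\delta$ once $Q>Q_0(m,\delta)$. This settles the pointwise statement.

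The part I expect to be the main obstacle is making the choice of $t_1$ uniform over $I$. I would argue that the index attaining $\min_j\abs{t_j(\xi)}$ is locally constant away from finitely many crossover points, and that at a crossover of $t_i,t_j$ their common value is $\gg_m H(t_i)^{-d_j}H(t_j)^{-d_i}$, which together with the resultant bounds forces $\abs{P}$ to be bounded below there; once $\tau$ exceeds a threshold of size $\asymp m\la$ this contradicts the hypothesis, so no crossover can occur and the pointwise $t_1$ works for all $\xi\in I$. For the remaining range of $\tau$ one exploits that $\abs{P(\xi)}<Q^{-\tau}$ on the whole interval forces $\abs{I}\ll_m Q^{-\tau/m}$ (the leading coefficient of $P$ is a nonzero integer, while a Chebyshev-type estimate bounds it by $\ll_m\abs{I}^{-\deg P}Q^{-\tau}$): over such a short interval the derivative estimate $\|t_1'\|_\infty\ll_m H(t_1)$ propagates the value bound from one point to the entire interval whenever the target exponent leaves room, and the few residual configurations are handled by taking $t_1$ to be a low-degree factor whose radical has its roots in $I$. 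Carrying out this dichotomy cleanly — not the resultant computation itself — is where the real work of the proof lies.
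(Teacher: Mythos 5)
Your pointwise argument is sound, and it is exactly the mechanism this paper builds around Lemma \ref{lm:TI2000}: the resultant of two coprime prime-power factors is a nonzero integer, the Sylvester matrix of the shifted polynomials has entries $\ll_m H(t_i)$, resp.\ $\ll_m H(t_j)$, on $(-m,m)$, so $\max\left(\abs{t_i(\xi)},\abs{t_j(\xi)}\right) \gg_m H(t_i)^{-d_j}H(t_j)^{-d_i}$ (the one-column case of Lemma \ref{lm:sylvester}), and multiplying these bounds over $k\ne 1$ and invoking Lemma \ref{lm:H1H2=H12} for $H(P/t_1)$ gives precisely the stated exponent for the factor minimizing $\abs{t_k(\xi)}$ at a \emph{fixed} $\xi$. (The content of $P$ is harmless: it only decreases $\prod_k\abs{t_k(\xi)}$ relative to $\abs{P(\xi)}$ and the product of heights relative to $H(P)$, so the primitivity remark costs nothing.)

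The genuine gap is the uniformity in $\xi$, which is the entire point of the lemma compared with Gelfond's single-point version, and your dichotomy does not close it. At a crossover point the only lower bound available for the remaining factors is the common minimum, so the contradiction you invoke costs a product of up to $m$ such bounds and needs $\tau$ of size roughly $m^2\la$, not $\asymp m\la$; on the other side, propagating the value of $t_1$ across an interval of length $\ll_m Q^{-\tau/m}$ via $\abs{t_1'}\ll_m H(t_1)$ requires roughly $\tau\left(1-\tfrac1m\right) \le (m-m_1-1)\la_1+m_1(\la-\la_1)$, which can be vacuous (take $\la_1$ close to $\la$, $m_1=m-1$, the other factors of bounded height). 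Since the lemma is claimed for every $\tau>0$, the intermediate range of $\tau$ covered by neither branch must be handled, and the closing clause about residual configurations is not an argument. The repair is much simpler and is already in the paper's toolkit: let $B_j\subseteq I$ be the set of points at which $t_j$ attains $\min_k\abs{t_k(\xi)}$; by pigeonhole some $B_j$ satisfies $\mu_1 B_j \ge \mu_1 I/m$, your pointwise bound holds on $B_j$ with this fixed $j$, and Lemma \ref{lm:P(B)<} extends it to all of $I$ at the cost of a factor $c(m)<Q^{\delta}$ --- exactly the device used in the proof of Statement \ref{th:main:statRm} to pass from $B_d$ to $K_0$. With that substitution in place of your crossover analysis, the argument becomes a complete proof.
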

\noindent
In certain points $\xi \in \R$ which are located far enough from all algebraic numbers of bounded degree and height
using a corollary of Lemma \ref{lm:TI2000}
we can obtain a stronger estimate:
	\begin{align*}
		\abs{t_1(\xi)} < Q^{- \tau + \frac12m\la - \frac12 m_1\la_1 - \frac12 (m-m_1)(\la-\la_1) + \delta},
	\end{align*}
as indicated by Statements \ref{th:main:statDIV} and \ref{th:main:statDIVI} below.
We expect that the method may be further improved, allowing us to increase the value of $\ga$ up to $1$ in \eqref{th:main:stat}.

\section{Auxiliary statements}

The following lemmas will be used in the proof of Theorem \ref{th:main}.
\begin{lemma}[{\cite[Lemma 10]{Bern83}}] \label{lm:P(B)<}
	Let $I \subset \R$ be an interval and let $B$ be some measurable subset of $I$, $\mu_1 B \gg_n \mu_1 I$.
	If for some polynomial $P \in \PP[\le n]$ we have $\abs{P(\xi)} < L$ for any point $\xi \in B$,
	then $\abs{P(\xi)} \ll_n L$ holds for any $\xi \in I$.
\end{lemma}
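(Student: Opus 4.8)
The plan is to prove Lemma~\ref{lm:P(B)<} as a direct quantitative consequence of the structure of polynomials of bounded degree: on an interval, the sup-norm of a degree-$\le n$ polynomial is controlled by its sup-norm on any subset of comparable measure, with a constant depending only on $n$. The cleanest route I would take is through a compactness/normalization argument combined with the Markov–Bernstein-type rigidity of low-degree polynomials; equivalently, one can argue explicitly via Lagrange interpolation on well-separated sample points inside $B$.

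First I would reduce to a normalized setting. After an affine change of variable $x \mapsto a + \ell t$ sending $I$ to $[0,1]$ (where $\ell = \mu_1 I$), the hypothesis becomes: $B' \subseteq [0,1]$ with $\mu_1 B' \ge \kappa$ for some $\kappa = \kappa(n) > 0$, and $|\tilde P(t)| < L$ for all $t \in B'$, where $\tilde P(t) = P(a + \ell t)$ still has degree $\le n$. The goal is $|\tilde P(t)| \ll_n L$ on all of $[0,1]$. Note the affine substitution does not change the degree, so the constant will genuinely depend on $n$ only and not on $I$. Then I would extract from $B'$ a set of $n+1$ points $t_0 < t_1 < \dots < t_n$ that are pairwise separated by at least $\delta_n := \kappa/(2(n+1))$: this is possible because a subset of $[0,1]$ of measure $\ge \kappa$ cannot be covered by $n+1$ intervals each of length $< \kappa/(n+1)$, so a greedy selection yields the required spacing. (The bound $\mu_1 B \gg_n \mu_1 I$ in the statement is exactly what guarantees $\kappa$ is bounded below in terms of $n$.)

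Next, with the $n+1$ separated nodes in hand, I would invoke Lagrange interpolation: since $\deg \tilde P \le n$, for every $t \in [0,1]$
\begin{equation*}
	\tilde P(t) = \sum_{j=0}^{n} \tilde P(t_j) \prod_{k \ne j} \frac{t - t_k}{t_j - t_k}.
\end{equation*}
Each $|\tilde P(t_j)| < L$ by hypothesis, each numerator factor $|t - t_k| \le 1$, and each denominator factor satisfies $|t_j - t_k| \ge \delta_n^{|j-k|}$-type bounds — more crudely, $\prod_{k\ne j}|t_j - t_k| \ge (\delta_n)^{n} \cdot (\text{something}) \gg_n 1$ since consecutive nodes are $\ge \delta_n$ apart and there are at most $n$ factors. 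Hence $|\tilde P(t)| \le (n+1)\,L\,/\,(\delta_n^{\,n(n+1)/2}) =: c(n) L$, which is the desired conclusion after undoing the normalization. I would state the separation estimate for the product of node differences as a short sublemma to keep the bookkeeping clean.

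The main obstacle — really the only nontrivial point — is the combinatorial extraction of well-separated nodes from a set of positive measure, and in particular making sure the separation parameter $\delta_n$ depends only on $n$ (using $\mu_1 B \gg_n \mu_1 I$) and survives the affine rescaling. Everything downstream is a routine estimate on the Lagrange basis. One subtlety to address: if $B$ is not closed, one works with a measurable subset and the greedy selection still goes through since we only need points, not intervals; alternatively one passes to a compact subset of $B$ of measure $\ge \tfrac12 \mu_1 B$, which changes $\kappa$ by a factor of $2$ and hence $c(n)$ by a constant. I would remark that this is the standard argument behind such "measure-to-norm" transfer lemmas and cite it as folklore if a cleaner reference than \cite{Bern83} is not at hand.
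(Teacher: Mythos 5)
Your argument is correct and is essentially the standard proof of this lemma (the paper itself gives no proof, citing Bernik's Lemma 10, whose underlying argument is exactly this: rescale $I$ to $[0,1]$, extract $n+1$ well-separated nodes from the subset of measure $\gg_n 1$, and transfer the bound via Lagrange interpolation with basis functions controlled by the node separation). The only cosmetic slip is the separation estimate written as $|t_j-t_k|\ge\delta_n^{|j-k|}$, which should be $|t_j-t_k|\ge |j-k|\,\delta_n$; your cruder product bound still yields a constant depending only on $n$, so nothing is affected.
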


\begin{lemma}[{\cite[Chapter 1, §2, Lemma IV]{Gelfond}}] \label{lm:H1H2=H12}
	For any $n \in \N$ 
	there exist real values $C_A(n)>0$ and $C_B(n)>0$, such that
	for any non-zero polynomial $P(x) \in \C[x]$, $\deg P \le n$,
	which is a product of $k \in \N$ polynomials $P_i(x) \in \C[x]$, we have	
	\begin{gather*}
		C_A(n) H(P) \le H(P_1) \cdot ... \cdot H(P_k) \le C_B(n) H(P).
	\end{gather*}
\end{lemma}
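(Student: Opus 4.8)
The plan is to route everything through the Mahler measure and exploit its multiplicativity. For a non-zero $Q(x) = b_d x^d + \dots + b_0 \in \C[x]$ of exact degree $d$ with complex roots $\beta_1,\dots,\beta_d$, set $M(Q) = \abs{b_d}\prod_{i=1}^{d}\maxc{1,\abs{\beta_i}}$. Since the leading coefficient of a product is the product of the leading coefficients and the multiset of roots of a product is the union of the multisets of roots, $M$ is multiplicative, $M(Q_1Q_2)=M(Q_1)M(Q_2)$, and hence $M(P)=\prod_{i=1}^{k}M(P_i)$. The statement then follows by combining this identity with a two-sided comparison between $H$ and $M$ valid for polynomials of degree at most $n$.

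First I would dispose of the constant factors. A constant $c$ has $H(c)=\abs{c}=M(c)$, and multiplying a polynomial by $c$ multiplies both its height and its Mahler measure by $\abs{c}$; so if $P$ is itself constant the claim is trivial (one may take $C_A=C_B=1$), and otherwise all constant factors may be merged into one of the non-constant $P_i$ without changing either side of the desired inequality. After this reduction each $\deg P_i\ge 1$ and $\sum_i\deg P_i=\deg P\le n$, so $k\le n$ — this is precisely the point that prevents $C_A(n)$ and $C_B(n)$ from depending on $k$.

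Next come the two comparison inequalities. Expanding $Q(x)=b_d\prod_{i}(x-\beta_i)$, the coefficient $b_{d-j}$ equals $\pm b_d$ times the $j$-th elementary symmetric function of the roots, so $\abs{b_{d-j}}\le\binom{d}{j}M(Q)\le 2^{d}M(Q)$, giving $H(Q)\le 2^{n}M(Q)$ when $d\le n$. In the opposite direction, Landau's inequality $M(Q)\le\bigl(\sum_j\abs{b_j}^2\bigr)^{1/2}$ (or even the cruder $M(Q)\le\sum_j\abs{b_j}$, immediate from Jensen's formula since $\abs{Q}\le\sum_j\abs{b_j}$ on the unit circle) yields $M(Q)\le\sqrt{n+1}\,H(Q)$ when $d\le n$.

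Finally I would combine the pieces. For the upper estimate,
\[
\prod_{i=1}^{k}H(P_i)\ \le\ \prod_{i=1}^{k}2^{\deg P_i}M(P_i)\ =\ 2^{\deg P}M(P)\ \le\ 2^{n}\sqrt{n+1}\,H(P),
\]
so $C_B(n)=2^{n}\sqrt{n+1}$ works; for the lower estimate, using $k\le n$ and $H(P)\le 2^{n}M(P)$,
\[
\prod_{i=1}^{k}H(P_i)\ \ge\ \prod_{i=1}^{k}\frac{M(P_i)}{\sqrt{\deg P_i+1}}\ \ge\ \frac{M(P)}{(n+1)^{n/2}}\ \ge\ \frac{H(P)}{2^{n}(n+1)^{n/2}},
\]
so $C_A(n)=\bigl(2^{n}(n+1)^{n/2}\bigr)^{-1}$ works. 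The only genuinely delicate point is the bookkeeping around the constant factors in the reduction step (without it, $k$ is unbounded and a naive induction on the two-factor case would leave a constant depending on $k$); the rest is a direct combination of the multiplicativity of $M$ with the standard two-sided comparison between $M$ and $H$, and is in essence the argument of Gelfond's monograph rephrased in terms of the Mahler measure.
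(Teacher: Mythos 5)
Your proof is correct, and since the paper only quotes this lemma from Gelfond's monograph without reproducing a proof, the relevant comparison is with the cited source: your Mahler-measure argument (multiplicativity of $M$, the two-sided comparison $H(Q) \le 2^{\deg Q} M(Q) \le 2^{\deg Q}\sqrt{\deg Q + 1}\, H(Q)$, and the reduction that caps the number of factors at $n$) is essentially Gelfond's own root-product argument in modern dress. The constants $C_A(n)$ and $C_B(n)$ you obtain depend only on $n$, exactly as the lemma requires, so there is nothing to fix.
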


\begin{lemma}[{\cite[Lemma 3.3]{TI2000}}] \label{lm:TI2000}
    Let $P_1(x), P_2(x) \in \C[x]$, $\deg P_i = n_i \ge 0$, be polynomials and 
    let $\xi \in \C$. 
    The resultant of polynomials $P_1$ and $P_2$ is equal to 
    the determinant of Sylvester matrix for shifted polynomials 
    $S_1(x)=P_1(x+\xi)$ and $S_2(x)=P_2(x+\xi)$:
    \begin{align} \label{lm:TI2000:sylvester}
    \arraycolsep=0.8pt
    R(P_1, P_2)=
    \begin{array}{c c c c c c c c c c c c}
	\ldelim|{8}{1mm}&P_1(\xi)&P_1^{'}(\xi)&        &\cdots      &\phantom{ }&  &\frac{1}{n_1!} P_1^{(n_1)}(\xi)&      &                               &\rdelim|{8}{1mm}&\rdelim\}{4}{1mm}[$n_2$]\\
                    &        &\ddots      &\ddots  &            &\phantom{ }&  &                               &\ddots&                               &                &\\
                    &        &            &P_1(\xi)&P_1^{'}(\xi)&\phantom{ }&  &\cdots                         &      &\frac{1}{n_1!} P_1^{(n_1)}(\xi)&                &\\
	                &P_2(\xi)&P_2^{'}(\xi)&        &\cdots      &\phantom{ }&  &\frac{1}{n_2!} P_2^{(n_2)}(\xi)&      &                               &                &\rdelim\}{4}{1mm}[$n_1$]\\
                    &        &\ddots      &\ddots  &            &\phantom{ }&  &                               &\ddots&                               &                &\\
                    &	     &            &P_2(\xi)&P_2^{'}(\xi)&\phantom{ }&  &\cdots                         &      &\frac{1}{n_2!} P_2^{(n_2)}(\xi)&                &
    \end{array}
    \end{align}
\end{lemma}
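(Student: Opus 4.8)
\emph{Proof strategy.} The plan is to obtain the identity from two essentially independent observations: \emph{(i)} the array displayed in \eqref{lm:TI2000:sylvester} is exactly the Sylvester matrix of the translated polynomials $S_1(x)=P_1(x+\xi)$ and $S_2(x)=P_2(x+\xi)$, so that its determinant equals the resultant $R(S_1,S_2)$; and \emph{(ii)} the resultant is unchanged under a simultaneous shift of the argument, i.e.\ $R(S_1,S_2)=R(P_1,P_2)$. Concatenating the two equalities proves the lemma. The degenerate cases $n_1=0$ or $n_2=0$ follow immediately from the definition of the resultant, so I may assume $n_1,n_2\ge 1$.

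For \emph{(i)} I would expand $P_i$ about $\xi$ by Taylor's formula: for $i=1,2$,
\begin{align*}
S_i(x)=P_i(x+\xi)=\sum_{k=0}^{n_i}\frac{P_i^{(k)}(\xi)}{k!}\,x^{k},
\end{align*}
so the coefficient of $x^k$ in $S_i$ is precisely $\tfrac{1}{k!}P_i^{(k)}(\xi)$; in particular the top coefficient $\tfrac{1}{n_i!}P_i^{(n_i)}(\xi)$ is the leading coefficient of $P_i$, hence non-zero, and $\deg S_i=n_i$. Writing down the Sylvester matrix of the pair $(S_1,S_2)$ — the $(n_1+n_2)\times(n_1+n_2)$ matrix whose first $n_2$ rows are the successive one-step shifts of the coefficient row $(P_1(\xi),P_1'(\xi),\dots,\tfrac{1}{n_1!}P_1^{(n_1)}(\xi))$ of $S_1$, and whose last $n_1$ rows are the analogous shifts of the coefficient row of $S_2$ — reproduces exactly \eqref{lm:TI2000:sylvester}; by the standard determinantal formula for the resultant, its determinant is $R(S_1,S_2)$.

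For \emph{(ii)} I would use the product expression for the resultant. Writing $P_1(x)=a\prod_{i=1}^{n_1}(x-\al_i)$ and $P_2(x)=b\prod_{j=1}^{n_2}(x-\beta_j)$, with $a,b\ne 0$ the leading coefficients, one has
\begin{align*}
R(P_1,P_2)=a^{n_2}b^{n_1}\prod_{i=1}^{n_1}\prod_{j=1}^{n_2}(\al_i-\beta_j).
\end{align*}
The substitution $x\mapsto x+\xi$ carries the roots of $P_1$ and $P_2$ to $\al_i-\xi$ and $\beta_j-\xi$ respectively while leaving $a$ and $b$ untouched, so every factor $\al_i-\beta_j$ and the prefactor $a^{n_2}b^{n_1}$ survive verbatim; hence $R(S_1,S_2)=R(P_1,P_2)$. (If one prefers to avoid roots and argue over an arbitrary commutative ring, the same conclusion follows by observing that the linear maps $(A,B)\mapsto AP_1+BP_2$ and $(A,B)\mapsto AS_1+BS_2$, whose monomial-basis matrices are the two Sylvester matrices, are related by pre- and post-composition with the translation operators $f(x)\mapsto f(x-\xi)$ and $f(x)\mapsto f(x+\xi)$, each of which is unitriangular in the monomial basis and hence has determinant $1$; so the two determinants agree.)

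The one point requiring care — more bookkeeping than mathematics — is to reconcile the precise layout of \eqref{lm:TI2000:sylvester}, namely the increasing order of the Taylor coefficients within each row together with the assignment of $n_2$ rows to the $P_1$-block and $n_1$ rows to the $P_2$-block, with the convention under which the Sylvester determinant literally equals the resultant, since distinct layouts of the Sylvester matrix differ by a sign depending on $n_1$ and $n_2$. In the applications only $\abs{R(P_1,P_2)}$ intervenes, so this sign plays no role there; but to assert \eqref{lm:TI2000:sylvester} as an exact equality one fixes the resultant convention in accordance with the displayed matrix, as is done in \cite{TI2000}.
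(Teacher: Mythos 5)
Your proof is correct. Note that the paper itself gives no argument for this lemma --- it is quoted verbatim from Tishchenko \cite{TI2000} --- so there is nothing internal to compare against; your two-step argument (Taylor expansion identifying the displayed array as the Sylvester matrix of $S_1,S_2$, plus translation invariance of the resultant via the root-product formula) is the standard proof of the cited result, and you rightly flag the layout/sign convention, which is immaterial here since the paper only ever uses $\abs{R(P_1,P_2)}\ge 1$.
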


\begin{lemma} \label{lm:s<s}
	Let $P \in \PP[\le n]$ be a polynomial and
	let $\xi \in \C$ be a point, such that $\abs{\xi - \al} > L > 0$ for any root $\al \in \C$ of $P(x)$.
	We then have
	\begin{gather*}
		\abs{P^{(j)}(\xi)} \ll_n \abs{P(\xi)} L^{-j}, \,\, j = 1,2,... \,.
	\end{gather*}
\end{lemma}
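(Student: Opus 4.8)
The plan is to reduce the claim to an elementary bound on symmetric functions in the reciprocals of $\xi-\al_i$. Write $P(x)=a\prod_{i=1}^{m}(x-\al_i)$, where $m=\deg P\le n$, $a\ne 0$ is the leading coefficient and $\al_1,\dots,\al_m\in\C$ are the roots counted with multiplicity. If $P$ is constant (or more generally if $j>m$) then $P^{(j)}\equiv 0$ and the inequality is trivial, so assume $1\le j\le m$. Expanding around $\xi$ we have $P(x)=a\prod_{i=1}^{m}\bigl((\xi-\al_i)+(x-\xi)\bigr)$, so comparing with the Taylor expansion $P(x)=\sum_{k}\frac{P^{(k)}(\xi)}{k!}(x-\xi)^k$ gives
\begin{align*}
	\frac{P^{(j)}(\xi)}{j!}=a\,e_{m-j}(\xi-\al_1,\dots,\xi-\al_m),
\end{align*}
where $e_k$ denotes the $k$-th elementary symmetric polynomial; in particular the $k=0$ term recovers $P(\xi)=a\prod_{i=1}^m(\xi-\al_i)$.

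Since $\abs{\xi-\al_i}>L>0$ for every $i$, we have $P(\xi)\ne 0$, and we may divide. Using the identity $\frac{e_{m-j}(y_1,\dots,y_m)}{e_m(y_1,\dots,y_m)}=e_j\!\left(\tfrac{1}{y_1},\dots,\tfrac{1}{y_m}\right)$ — which holds because each $(m-j)$-element subset appearing in the numerator corresponds bijectively to its complementary $j$-element subset in the quotient — with $y_i=\xi-\al_i$ we obtain
\begin{align*}
	\frac{P^{(j)}(\xi)}{j!\,P(\xi)}=e_j\!\left(\frac{1}{\xi-\al_1},\dots,\frac{1}{\xi-\al_m}\right).
\end{align*}

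Finally I would bound the right-hand side crudely: $e_j$ is a sum of $\binom{m}{j}\le 2^m\le 2^n$ monomials, each a product of $j$ factors of the form $\frac{1}{\xi-\al_i}$, and each such factor has modulus $<L^{-1}$ by hypothesis. Hence $\left|\frac{P^{(j)}(\xi)}{j!\,P(\xi)}\right|<2^nL^{-j}$, which yields $\abs{P^{(j)}(\xi)}<n!\,2^n\,\abs{P(\xi)}L^{-j}\ll_n\abs{P(\xi)}L^{-j}$, as claimed. There is no genuine obstacle in this argument; the only points requiring a little care are the bookkeeping with repeated roots in the factorization over $\C$ and the degenerate cases ($P$ constant, or $j>\deg P$), where both sides vanish and the estimate is immediate.
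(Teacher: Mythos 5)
Your proof is correct and follows essentially the same route as the paper: the paper also factors $P(x)=a\prod_i(x-\al_i)$ and bounds $\abs{P^{(j)}(\xi)}$ by the sum over $j$-element subsets of $j!\,\abs{P(\xi)}/\prod\abs{\xi-\al_{i_\ell}}$, which is exactly your elementary-symmetric-function identity, then estimates each term by $L^{-j}$ and the number of terms by a constant depending on $n$. Your write-up merely makes the subset/complement bookkeeping and the degenerate cases ($j>\deg P$, constant $P$) explicit.
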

\begin{proof}
	Let $P(x) = a_k (x - \al_1) \cdot ... \cdot (x - \al_k)$, $k \le n$.
	It's not hard to see that
	\begin{gather*}
		\abs{P^{(j)}(\xi)} \le 
			\sum_{1 \le i_1 < i_2 < ... < i_{j} \le k} 
				\frac{ j! \abs{P(\xi)} }
				{ \abs{\xi - \al_{i_1}} \cdot ... \cdot \abs{\xi - \al_{i_{j}}} }
				\ll_n \abs{P(\xi)} L^{-j}, \mbox{ for } j = 1, ..., k, \\
			\abs{P^{(j)}(\xi)} = 0 \ll_n \abs{P(\xi)} L^{-j}, \mbox{ for } j = k+1, ... \, .
	\end{gather*}
\end{proof}

\noindent 
Using Lemma \ref{lm:TI2000} we can prove Lemma \ref{lm:sylvester},
which will be extensively used throughout the paper.
Note that \eqref{lm:sylvester:colsk} with the one-column permanent is essentially
A. Gelfond's lemma \cite[Chapter 3, §4, Lemma V]{Gelfond} for integer polynomials,
and \eqref{lm:sylvester:cols3interval} is an extension of 
V. Bernik's lemma \cite[Lemma 12]{Bern83} 
for polynomials of different degrees and heights.
\begin{lemma} \label{lm:sylvester}
	Let $n_1, n_2 \ge 1$ be integers, such that $n_1 + n_2 \le n$ for some integer $n$, 
	and let $\la_1 \ge 0, \la_2 \ge 0, Q \ge 1$ be reals.
    Let $P_1(x) \in \PP[= n_1][Q^{\la_1}]$, $P_2(x) \in \PP[=n_2][Q^{\la_2}]$
    be integer polynomials, having no common roots, and $\xi \in \val{-\frac12,\frac12}$. 
    
    For any natural $k \le n_1 + n_2$ we have
    \begin{align} \label{lm:sylvester:colsk}
    	\setlength\extrarowheight{5pt}
    	1 \ll_n \perm \val{
    	\begin{array}{c c c}
    	\abs{\nrm{P}^{\phantom{(k)}}_1(\xi)}	&	\cdots	&	\abs{\nrm{P}^{(k)}_1(\xi)}	\\
    											&	\ddots	&	\vdots					\\
    											&			&	\abs{\nrm{P}^{\phantom{(k)}}_1(\xi)}	\\
    	\abs{\nrm{P}^{\phantom{(k)}}_2(\xi)}	&	\cdots	&	\abs{\nrm{P}^{(k)}_2(\xi)}	\\
    											&	\ddots	&	\vdots					\\
    											&			&	\abs{\nrm{P}^{\phantom{(k)}}_2(\xi)}
    	\end{array}
    	}
    	Q^{n_1 \la_2 + n_2 \la_1},
    \end{align}
    where $	\nrm{P}^{(j)}_i(x) = P^{(j)}_i(x) Q^{-\la_i}, \,\, j \ge 0, \,\, i=1,2$.
  
    If $3 \le n_1 + n_2$ and in addition
    \begin{align} \label{lm:sylvester:cols3interval-cond}
    	\abs{P_i(\xi)} \le Q^{-\tau_i}, \, \tau_i \in \R, \, i=1,2,
    \end{align}
    for all points $\xi \in I$ of some interval $I \subseteq \val{-\frac12,\frac12}$, $\mu_1 I=Q^{-\eta}$, $\eta > 0$,
    then for any $\delta > 0$ and for all $Q>Q_0(n,\delta)$ we have 
    \begin{align} \label{lm:sylvester:cols3interval}
    	3\min\curly{\tau_1 + \la_1, \tau_2 + \la_2} - 2 \eta < n_1 \la_2 + n_2 \la_1 + \delta.
    \end{align}
\end{lemma}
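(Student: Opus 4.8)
The plan is to derive both parts of the lemma from the resultant identity of Lemma~\ref{lm:TI2000}. As $P_1,P_2\in\Z[x]$ have no common roots, $R(P_1,P_2)$ is a non-zero integer, so $\abs{R(P_1,P_2)}\ge1$; by Lemma~\ref{lm:TI2000} it equals $\det\mathrm{Syl}$, where $\mathrm{Syl}$ is the Sylvester matrix of the shifted polynomials $S_i(x)=P_i(x+\xi)$. Its entries are therefore the Taylor coefficients $\frac{1}{j!}P_i^{(j)}(\xi)$, arranged in $n_2$ rows coming from $P_1$ and $n_1$ rows coming from $P_2$ in the usual staircase pattern, and the first $k+1$ columns of $\mathrm{Syl}$, after dividing the $P_i$-rows by $Q^{\la_i}$, coincide --- up to the harmless factors $\frac{1}{j!}\le1$ --- with the matrix displayed in \eqref{lm:sylvester:colsk}. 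Also, since $\xi\in\val{-\frac12,\frac12}$, the crude termwise bound $\abs{P_i^{(j)}(\xi)}\ll_n H(P_i)\le Q^{\la_i}$ holds, so $\abs{\nrm{P}_i^{(j)}(\xi)}\ll_n1$ and every entry of $\mathrm{Syl}$ lying in a $P_i$-row is $\ll_n Q^{\la_i}$.

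To prove \eqref{lm:sylvester:colsk} I would expand $\det\mathrm{Syl}$ by the Laplace rule along the first $k+1$ columns (taking $k\le n_1+n_2-1$; the value $k=n_1+n_2$ is analogous), $\det\mathrm{Syl}=\sum_{i_1<\dots<i_{k+1}}\pm\,A_{\{i\}}B_{\{i\}}$, where $A_{\{i\}}$ is the $(k+1)\times(k+1)$ minor on rows $i_1,\dots,i_{k+1}$ and columns $1,\dots,k+1$ and $B_{\{i\}}$ is the complementary minor. Factoring $Q^{\la_1}$ out of each $P_1$-row and $Q^{\la_2}$ out of each $P_2$-row of $A_{\{i\}}$, and estimating $B_{\{i\}}$ trivially by the product of the sup-norms of its rows so that $\abs{B_{\{i\}}}\ll_n\prod_{r\notin\{i_1,\dots,i_{k+1}\}}Q^{\la_{\mathrm{type}(r)}}$, one checks that the powers of $Q$ always recombine to the single factor $Q^{n_1\la_2+n_2\la_1}$, no matter which rows fall into the minor. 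Bounding each $\abs{\det A_{\{i\}}}$ by the permanent of the matrix of its absolute values (using $\abs{\det X}\le\perm\abs{X}$) and summing over the $\ll_n1$ subsets reassembles the permanent of the first $k+1$ columns of the normalized $\mathrm{Syl}$, which is at most the permanent on the right-hand side of \eqref{lm:sylvester:colsk}; combined with $\abs{R(P_1,P_2)}\ge1$ this gives \eqref{lm:sylvester:colsk}. The steps needing care are the height bookkeeping above and the combinatorial identity that the sum, over all $(k+1)$-element sets of rows, of the permanents of the corresponding square submatrices equals the permanent of the full $(k+1)$-column matrix.

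For \eqref{lm:sylvester:cols3interval} I would first convert \eqref{lm:sylvester:cols3interval-cond} into pointwise derivative bounds on $I$: by the Markov brothers' inequality applied to $P_i$ on the interval $I$ of length $Q^{-\eta}$ on which $\abs{P_i}\le Q^{-\tau_i}$ --- equivalently, by Lagrange interpolation at suitable points of $I$ --- one obtains $\abs{P_i^{(j)}(\xi)}\ll_n Q^{-\tau_i}(\mu_1 I)^{-j}=Q^{-\tau_i+j\eta}$ for all $\xi\in I$ and $j=0,1,2$, hence $\abs{\nrm{P}_i^{(j)}(\xi)}\ll_n Q^{-(\tau_i+\la_i)+j\eta}$. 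Now apply \eqref{lm:sylvester:colsk} with $k=2$, which is legitimate since $n_1+n_2\ge3$. Every non-zero summand of the resulting three-column permanent is a product of three normalized entries, one from each column and from three distinct rows, and a short case analysis shows that their derivative orders sum to at most $2$: only the two staircase rows that begin in the first column have a non-zero entry there, both of order $0$, while the entries of orders $1$ and $2$ sit in the last two columns and cannot both be reached once a non-zero first-column entry has been used. Hence every summand is $\ll_n Q^{-3\minc{\tau_1+\la_1,\tau_2+\la_2}+2\eta}$, so the permanent is $\ll_n Q^{-3\minc{\tau_1+\la_1,\tau_2+\la_2}+2\eta}$, and \eqref{lm:sylvester:colsk} then gives $1\ll_n Q^{-3\minc{\tau_1+\la_1,\tau_2+\la_2}+2\eta+n_1\la_2+n_2\la_1}$. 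Taking logarithms and absorbing the implied constant, which depends only on $n$, into $\delta$ for $Q>Q_0(n,\delta)$ yields \eqref{lm:sylvester:cols3interval}. I expect the last observation --- that the derivative orders in a surviving term sum to at most $2$ --- to be the crux; it is what keeps the coefficient of $\eta$ equal to $2$ rather than the larger $n_1n_2$ that estimating the whole resultant would give.
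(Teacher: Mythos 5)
Your proposal is correct and, for the first assertion, follows the paper's own route: since $P_1,P_2$ are coprime integer polynomials the resultant is a nonzero integer, Lemma~\ref{lm:TI2000} writes it as the determinant of the shifted Sylvester matrix, and a Laplace expansion along the leading columns bounds that determinant by the permanent times $Q^{n_1\la_2+n_2\la_1}$ (the paper first normalizes the matrix by $Q^{-\la_i}$ and bounds every complementary factor by $c(n)$, while you factor the heights out row by row and use the tall-matrix definition of the permanent to reassemble the sum of square sub-permanents; the bookkeeping is equivalent). For the second assertion you deviate in exactly one step: the paper chooses a single point $\xi_0\in I$ at distance $\gg_n Q^{-\eta}$ from all roots of $P_1P_2$ (possible because $P_1P_2$ has at most $2n$ roots and $\mu_1 I=Q^{-\eta}$) and invokes Lemma~\ref{lm:s<s} to get $\abs{P_i^{(j)}(\xi_0)}\ll_n\abs{P_i(\xi_0)}Q^{j\eta}$, whereas you obtain the same bounds at every point of $I$ via Markov's inequality (or interpolation); both give $\abs{P_i^{(j)}}\ll_n Q^{-\tau_i+j\eta}$, and your staircase observation that every surviving term of the three-column permanent has total derivative order at most $2$ is precisely what the paper uses implicitly, so the conclusion follows identically, with the constant absorbed into $Q^{\delta}$. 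The only caveat is notational: you read the matrix in \eqref{lm:sylvester:colsk} as having $k+1$ columns (top-right entry of order $k$), while the paper's own usage (its ``two-column'' and ``three-column'' permanents) treats it as $k$ columns of orders $0,\dots,k-1$; this ambiguity sits in the statement itself and does not affect your argument, since the case actually needed (orders $0,1,2$, requiring $n_1+n_2\ge 3$) is available under either reading.
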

\begin{proof}
	From \eqref{lm:TI2000:sylvester} it follows that 
	\begin{align} \label{lm:sylvester:T}
		1 \le \abs{R(P_1,P_2)} = \abs{\det T} Q^{n_1 \la_2 + n_2 \la_1},
	\end{align}
	where $T$ is a matrix similar to \eqref{lm:TI2000:sylvester}, 
	but having terms $P^{(j)}_i(\xi)$ replaced with $\nrm{P}^{(j)}_i(\xi)$.
	We can estimate the elements of $T$ as follows:
	\begin{align*}
		\abs{\frac{1}{j!} \nrm{P}^{(j)}_i(\xi)} \le c(n), \, j \ge 0, \,\, i=1,2.
	\end{align*}

	For any $k \le n_1 + n_2$ 
	we may consider the expansion of $\det T$ by the first $k$ columns 
	(as it has at least $k$ columns)
	and observe that the absolute value of each summand of the expansion contains as a factor some summand
	of the expansion of the permanent from \eqref{lm:sylvester:colsk} 
	and the absolute values of the other factors can be estimated by $c(n)$.
	There are no more than $c(n)$ different summands in the expansion of $\det T$.
	Therefore, the absolute value of $\det T$ can be estimated from above by the value of the permanent from 
	\eqref{lm:sylvester:colsk} times $c(n)$ and so \eqref{lm:sylvester:colsk} follows immediately from 
	\eqref{lm:sylvester:T}. Note, that if $\min\curly{n_1,n_2}<k$, some summands of the permanent expansion 
	do not necessarily correspond to a summand of the expansion of $\det T$,
	but \eqref{lm:sylvester:colsk} is still true, 
	as the expansion of the permanent only serves as an upper bound.

	Assume that \eqref{lm:sylvester:cols3interval-cond} holds.
	Polynomial $P_1(x)P_2(x)$ has no more than $2n$ roots, so
	we can find a point $\xi_0 \in I$ such that $\abs{\xi_0-\al} \gg_n Q^{-\eta}$
	for each root $\al \in \C$ of $P_1(x)P_2(x)$.
	According to Lemma \ref{lm:s<s} the following estimates are true
	at $\xi_0$:
	\begin{align*}
		\abs{{P}'_i(\xi_0)}  &\ll_n \abs{{P}_i(\xi_0)} Q^{\eta}, \,  i=1,2,\\
		\abs{{P}''_i(\xi_0)} &\ll_n \abs{{P}_i(\xi_0)} Q^{2\eta}, \, i=1,2,
	\end{align*}
	therefore, each summand of the expansion of the three-column permanent from \eqref{lm:sylvester:colsk}
	can be estimated by $c(n) \max \curly{\abs{\nrm{P}_i(\xi_0)}}^{3} Q^{2\eta}$, so we obtain
	\begin{align*}
		1 \le c(n) \max \curly{\abs{\nrm{P}_i(\xi_0)}}^{3} Q^{2\eta + n_1 \la_2 + n_2 \la_1} \le 
			Q^{-3\min\curly{\tau_1 + \la_1, \tau_2 + \la_2}} Q^{2\eta + n_1 \la_2 + n_2 \la_1}.
	\end{align*}
	Estimating $c(n)$ by $Q^{\delta}$ and taking logarithms base $Q$, 
	we obtain \eqref{lm:sylvester:cols3interval}.	
\end{proof}

\section{Proof of Theorem 3}
We assume that the opposite to \eqref{th:main:stat} holds, 
i.e. there exists $n \ge 3$ such that for any $c_1(n) > 0$
there are infinitely many pairs $(Q,v)$ with $Q \rightarrow \infty$ and 
\begin{equation} \label{th:main:<v<}
	1.4 \le v \le \frac{7}{16}(n+1),
\end{equation}
such that
\begin{equation} \label{th:main:PP>c_1}
	\#\PP[n][Q,v,I_0] > c_1(n) Q^{n+1-v\ga},	
\end{equation}
and obtain a contradiction from this assumption for $Q>Q_0(n)$.
For the rest of the proof we fix one such pair $(Q,v)$.

Let $\J$ be a minimal set of non-intersecting half-open intervals $J \subset \R$ of length $Q^{-v}$ 
covering the interval $I_0$.
We have $C_L(n) Q^v \le \#\J \le C_U(n) Q^v$ 
for $Q>Q_0(n)$ and some values $C_L(n) > 0$ and $C_U(n) > 0$.
Obviously,
\begin{gather*}
	\PP[n][Q,v,I_0] \subseteq \bigcup_{J \in \J} \PP[n][Q,v,J],\\
	\#\PP[n][Q,v,I_0] \le \sum_{J \in \J} \#\PP[n][Q,v,J].
\end{gather*}
According to the pigeonhole principle there is an interval $J \in \J$ such that
\begin{equation*}
	\#\PP[n][Q,v,J] \gg_n Q^{n+1-v(1+\ga)}.
\end{equation*}	
In fact, we can deduce more information from \eqref{th:main:PP>c_1} in Statement \ref{th:main:statRho} below.

We fix some small enough value $\D(n) > 0$,
which will control the "precision" of our estimates.
For example, $c(n) < Q^{\D(n)}$ for $Q>Q_0(n)$ for any particular value $c(n)$.
Also we can estimate values $C_A(n)$ and $C_B(n)$ from Lemma \ref{lm:H1H2=H12} as follows:
\begin{align} \label{lm:H1H2=H12:Q>}
	Q^{-\frac{\D}{n}} \le C_A(n), C_B(n) \le Q^{\frac{\D}{n}}.
\end{align}
As we can see later, it's sufficient to take 
\begin{align} \label{th:main:D}
	\D(n) = \frac{1}{128 n}.
\end{align}

\begin{statement} \label{th:main:statRho}
	For any $c_2(n) > 0$ we can choose $c_1(n) > 0$ such that for any $Q>Q_0(n)$
	there exist a real $\rho$, $0 \le \rho \le v$, and 
	a set of intervals $\K \subseteq \J$,
	\begin{gather}
		\label{th:main:statRho:muK}
		\#\K \ge Q^{v-\rho-\D},\,\,
		\mu_1\val{\bigcup_{K \in \K}K} \ge Q^{-\rho-\D},
	\end{gather}
	such that for any interval $K \in \K$ we have
	\begin{gather} \label{th:main:statRho:countP}
		\#\PP[n][Q,v,K] \ge c_2(n) Q^{n+1-v(1+\ga)+\rho}.
	\end{gather}
\end{statement}

\begin{proof}
	Let $T$ be an integer such that $T \D \le v < (T+1) \D$.
	By definition of $v$ and $\D$, $0 \le T \le T_0(n) = \frac{n}{\D}$.
	Take $A = n+1-v(1+\ga)$ and define the following subsets of $\J$:
	\begin{align*}
		\K_{-1}   &= \curly{J \in \J | \, \#\PP[n][Q,v,J] < c_2(n) Q^{A}}, 			\\
		\K_t      &= \curly{J \in \J | \, c_2(n) Q^{A+t\D} \le \#\PP[n][Q,v,J] < c_2(n) Q^{A+(t+1)\D}}, \,\,\,	t = 0,...,T-1, \nonumber \\
		\K_{T}    &= \curly{J \in \J | \, c_2(n) Q^{A+T\D} \le \#\PP[n][Q,v,J]}.
	\end{align*}
	Obviously, $\J = \K_{-1} \sqcup \K_0 \sqcup ... \sqcup K_{T}$.	
	Suppose that $\#\K_t < Q^{v-(t+1)\D}$ for $t=0,...,T$.
	Trivially, $\#\K_{-1} \le C_U(n) Q^v$ and $\#\K_{T} = 0$.	
	We then have
	\begin{gather*}
		c_1(n) Q^{n+1-v\ga} < \#\PP[n][Q,v,I_0] \le \sum_{J \in \J} \#\PP[n][Q,v,J] 
		= \sum_{t=-1}^{T} \sum_{J\in\K_{t}} \#\PP[n][Q,v,J] < \\
		< c_2(n) Q^{n+1-v(1+\ga)} C_U(n) Q^v + \sum_{t=0}^{T-1} c_2(n) Q^{n+1-v(1+\ga)+(t+1)\D} Q^{v-(t+1)\D} \le \\
		\le c_2(n) Q^{n+1-v\ga} \val{C_U(n) + T} \le c_2(n) Q^{n+1-v\ga} \val{C_U(n) + \frac{n}{\D}}.
	\end{gather*}
	By choosing $c_1(n) = c_2(n)\val{C_U(n) + \frac{n}{\D}}$ 
	we obtain a contradiction.
	Therefore, $\#\K_t \ge Q^{v-(t+1)\D}$ for some $t=0,...,T$.
	Now taking $\rho = t\D$ and $\K=\K_{t}$ finishes the proof.
\end{proof}

Among the intervals $K \in \K$ we choose a special interval $K_0$, 
at least half of the points of which are located far enough from all algebraic numbers of bounded degree and height.
With the help of this fact, 
knowing the absolute values of certain polynomials we may estimate the absolute values of their derivatives.
\begin{statement} \label{th:main:statB0}
For any $Q>Q_0(n)$
there exist an interval $K_0 \in \K$ and a measurable set $B_0 \subseteq K_0$ with the following properties:
\begin{enumerate}
	\item 
	the measure of $B_0$ is at least
	\begin{gather} \label{th:main:statB0:muB_0}
		\mu_1 B_0 \ge \frac{1}{2}\mu_1 K_0,
	\end{gather}
	\item
	for any integer polynomial $s(x)$, such that
	\begin{gather} \label{th:main:statB0:poly-any:def}
		\deg s = m \le n, H(s) = Q^{\la} \le Q^{1+\D},
	\end{gather}	
	and for any $\xi \in B_0$ we have
	\begin{gather*} \label{th:main:statB0:poly-any:prop}
		\abs{s'\val{\xi}} < \abs{s \val{\xi}} Q^{\rho + \la(m+1) + 3\D},
	\end{gather*}	
\end{enumerate}
\end{statement}

\begin{proof}
	Let $A_1$ be the set of all algebraic numbers of degree not exceeding $n$ and of height not exceeding $Q^{1+2\D}$.
	For each $\al \in A_1$ define a real interval (note that $\al$ is not necessarily real):
	\begin{align} 
		\label{th:main:statB0:sigma}
		\sigma(\al) &= \curly{x \in \R \Big| \abs{x-\al} < H(\al)^{-\deg \al-1}Q^{-\rho-2\D}},
	\end{align}
	and let $S = \bigcup_{\al \in A_1} \sigma(\al)$ be the union of the intervals defined above.
	\noindent
	For $Q>Q_0(n)$ we have 	
	\begin{gather*} \label{th:main:statB0:muS1}
			\mu_1 S  
			\le 
				\sum_{1 \le m \le n} \,\,
				\sum_{1 \le H \le Q^{1+2\D}} \,\,
				\sum_{\substack{\al \in A_1: \\ \deg \alpha = m, \\ H(\alpha)=H}} 
				\mu_1 \sigma(\alpha)
			\le
				\sum_{1 \le m \le n} \,\,
				\sum_{\,\,1 \le H \le Q^{1+2\D}} 
				c(m) H^{m} H^{-m-1} Q^{-\rho-2\D}
			\le \\
			\le
				c(n)
				Q^{-\rho-2\D}
				\sum_{1 \le m \le n} \,\,
				\sum_{\,\,1 \le H \le Q^{1+2\D}} 
				H^{-1} \le c(n) Q^{-\rho-2\D} \ln \val{Q} \le \frac{1}{2} Q^{-\rho-\D},
	\end{gather*}
	therefore, from \eqref{th:main:statRho:muK} it follows that 
	there is an interval $K_0 \in \K$ such that 
	$\mu_1\val{K_0 \bigcap S} \le \frac{1}{2} \mu_1 K_0$.
	If we let $B_0 = K_0 \bigcap \val{\R \setminus S}$, we can clearly see that \eqref{th:main:statB0:muB_0} holds.		
	Any polynomial $s(x)$ defined by \eqref{th:main:statB0:poly-any:def} 
	may have only roots $\al$ such that $\al \in A_1$, $\deg \al \le \deg s$, $H(\al) \ll_n H(s)$.
	Therefore, by \eqref{th:main:statB0:sigma} for any $\xi \in B_0$ and any root $\al$ of $s(x)$ we obtain 
	\begin{gather*}
		\abs{\xi - \al} \ge H(\al)^{-\deg \al -1}Q^{-\rho-2\D} \ge c(n) H(s)^{-\deg s-1} Q^{-\rho-2\D},
	\end{gather*}
	which, according to Lemma \ref{lm:s<s}, gives us for $Q>Q_0(n)$ the following:
	\begin{gather*}
		\abs{s'(\xi)} \le c(n) \abs{s(\xi)} Q^{\rho+\la(m+1)+2\D} < \abs{s(\xi)}Q^{\rho+\la(m+1)+3\D}.
	\end{gather*}
\end{proof}

Points of $B_0$ are special in some sense.
In particular, given an integer polynomial $R(x)$ small enough at some point $\xi \in B_0$,
Statements \ref{th:main:statDIV} and \ref{th:main:statDIVI} guarantee that one particular integer polynomial divisor of $R(x)$
is substantially smaller at $\xi$ than the other divisors.
\begin{statement} \label{th:main:statDIV}
	Let $S(x) \in \PP[\le m][Q^\la]$, $2 \le m \le n$, $0 \le \la$, be an integer polynomial,
	which is a product of two non-constant integer polynomials $s_1(x)$ and $s_2(x)$, having no common roots,
	such that in some point $\xi \in B_0$ we have:
	\begin{gather}
		\label{th:main:statDIV:R=t1t2}
		S(x) = s_1(x) s_2(x), \\
		\label{th:main:statDIV:mili}
		\deg s_i = m_i \ge 1, H(s_i) = Q^{\la_i} \le Q^{1+\D}, \\
		\nonumber
		\abs{S(\xi)} = Q^{-\tau}, \, \abs{s_i(\xi)} = Q^{-\tau_i}, \, i=1,2,\\
		\label{th:main:statDIV:t1<t2}
		\frac{\abs{s_1(\xi)}}{H(s_1)} = Q^{-\tau_1-\la_1} \le Q^{-\tau_2-\la_2} = \frac{\abs{s_2(\xi)}}{H(s_2)}.
	\end{gather}
	If in addition
	\begin{gather*}
		\tau > \rho + m\la + 5\D,
	\end{gather*}
	then for $Q>Q_0(n)$ the following is true:
	\begin{gather}
		\label{th:main:statDIV:deg}
		m_1 \ge 2, \\
		\label{th:main:statDIV:tau2<}
		\tau_2 \le \tau_2 + \la_2 < \frac{1}{2}\val{m_1\la_2 + m_2\la_1 + \D} < 
			\frac{1}{2}m\la - \frac{1}{2}m_1\la_1 - \frac{1}{2}(m-m_1)(\la-\la_1) + \D,\\
		\label{th:main:statDIV:tau1>}
		\tau_1 > \tau - \frac{1}{2}m\la + \frac{1}{2}m_1\la_1 + \frac{1}{2}(m-m_1)(\la-\la_1) -\D.
	\end{gather}
\end{statement}
\begin{proof}
	Since $S(x)$ is a product of $s_1(x)$ and $s_2(x)$, by Lemma \ref{lm:H1H2=H12} we have
	\begin{gather*}
		m_1 + m_2 \le m, \, \la_1 + \la_2 \le \la + \frac{\D}{n}, \, \tau_1 + \tau_2 = \tau,
	\end{gather*}
	therefore, we obtain
	\begin{gather*}
		m_1\la_2+m_2\la_1 = (m_1+m_2)(\la_1+\la_2)-m_1\la_1-m_2\la_2 \le \\
		m\la-m_1\la_1-m_2\la_2 + \D, \\
		m_1\la_2+m_2\la_1 \le m_1\val{\la-\la_1+\frac{\D}{n}}+(m-m_1)\la_1 \le \\
		\le m\la - m_1\la_1 - (m-m_1)(\la-\la_1) + \D.
	\end{gather*}
	Due to $\eqref{th:main:statDIV:mili}$ we may apply Statement \ref{th:main:statB0} and estimate $\abs{s'_i(\xi)}$ as follows:
	\begin{gather*}
		\abs{s'_i(\xi)} < \abs{s_i(\xi)} Q^{\rho + (m_i+1)\la_i + 3\D}, \, i=1,2.
	\end{gather*}
	As $m_1 + m_2 \ge 2$, we have \eqref{lm:sylvester:colsk} with the two-column permanent:
	\begin{gather*} \label{th:main:statDIV:max}
		1 \ll_n \max\val{
			\abs{\nrm{s}^{\phantom{0}}_1(\xi)}^2,
			\abs{\nrm{s}^{\phantom{0}}_1(\xi)} \abs{\nrm{s}^{\phantom{0}}_2(\xi)},
			\abs{\nrm{s}^{\phantom{0}}_2(\xi)}^2,
			\abs{\nrm{s}^{\phantom{0}}_1(\xi)} \abs{\nrm{s}'_2(\xi)},
			\abs{\nrm{s}'_1(\xi)} \abs{\nrm{s}^{\phantom{0}}_2(\xi)}
			} 
		Q^{m_1\la_2+m_2\la_1},
	\end{gather*}
	where $	\nrm{s}^{(j)}_i(x) = s^{(j)}_i(x) Q^{-\la_i}, \,\, j \ge 0, \,\, i=1,2$.
	If the maximum is attained at the term 
	$\abs{\nrm{s}^{\phantom{0}}_1(\xi)} \abs{\nrm{s}^{\phantom{0}}_2(\xi)}$, a contradiction follows immediately:
	\begin{gather*}
		1 \le c(n) \abs{\nrm{s}^{\phantom{0}}_1(\xi)} \abs{\nrm{s}^{\phantom{0}}_2(\xi)} Q^{m_1\la_2+m_2\la_1} \le
		c(n) Q^{-\rho - m\la - 5\D} Q^{-\la_1 -\la_2} Q^{m \la -m_1\la_1 -m_2\la_2 +\D} \le c(n) Q^{-4\D}.
	\end{gather*}
	We also obtain a contradiction if the maximum is attained at the term 
	$\abs{\nrm{s}'_1(\xi)} \abs{\nrm{s}^{\phantom{0}}_2(\xi)}$ or 
	$\abs{\nrm{s}^{\phantom{0}}_1(\xi)} \abs{\nrm{s}'_2(\xi)}$.
	Let's consider $\abs{\nrm{s}^{\phantom{0}}_1(\xi)} \abs{\nrm{s}'_2(\xi)}$, for example. 
	The same argument works for the other one.
	\begin{gather*}
		1 \le c(n) \abs{\nrm{s}^{\phantom{0}}_1(\xi)} \abs{\nrm{s}'_2(\xi)} Q^{m_1\la_2+m_2\la_1} \le \\
		\le c(n) \abs{s^{\phantom{0}}_1(\xi)} \abs{s^{\phantom{0}}_2(\xi)} Q^{-\la_1-\la_2} Q^{\rho + (m_2+1)\la_2 + 3\D} Q^{m_1\la_2+m_2\la_1} \le \\
		\le c(n) Q^{-\rho - m\la - 5\D} Q^{-\la_1-\la_2} Q^{\rho + (m_2+1)\la_2 + 3\D} Q^{m\la - m_1\la_1 - m_2\la_2 + \D} \le \\
		\le c(n) Q^{-(m_1+1)\la_1-\D}.
	\end{gather*}
	Therefore, the maximum is attained at one of the terms 
	$\abs{\nrm{s}^{\phantom{0}}_1(\xi)}^2$ and $\abs{\nrm{s}^{\phantom{0}}_2(\xi)}^2$,
	and according to \eqref{th:main:statDIV:t1<t2} the maximum is attained at the latter one.
	In other words,
	\begin{gather*}
		1 \le c(n) \abs{\nrm{s}^{\phantom{0}}_2(\xi)}^2 Q^{m_1\la_2+m_2\la_1} <
		Q^{-2\tau_2} Q^{-2\la_2} Q^{m_1\la_2+m_2\la_1 + \D} \le \\
		\le Q^{-2\tau_2} Q^{-2\la_2} Q^{m\la - m_1\la_1 - (m-m_1)(\la-\la_1) + 2\D},
	\end{gather*}				
	which gives us \eqref{th:main:statDIV:tau2<}. 
	Then \eqref{th:main:statDIV:tau1>} immediately follows from \eqref{th:main:statDIV:R=t1t2}.
	
	If $m_1=1$, we can see from \eqref{lm:TI2000:sylvester} that $\abs{\nrm{s}^{\phantom{0}}_2(\xi)}^2$ 
	can be omitted in the maximum, therefore,
	\begin{gather*}
		1 \le c(n) \abs{\nrm{s}^{\phantom{0}}_1(\xi)}^2 Q^{m_1\la_2+m_2\la_1}, \\
		\tau_1 \le \tau_1 + \la_1 < \frac{1}{2}m\la -\frac{1}{2}m_1\la_1 -\frac{1}{2}(m-m_1)(\la-\la_1) +\D \le \frac{1}{2}m\la + \D, \\
		\tau_2 + \la_2 \ge \tau_2 > \tau - \frac{1}{2}m\la - \D \ge \rho + \frac{1}{2}m\la + 4 \D > \frac{1}{2}m\la + \D > \tau_1 + \la_1,
	\end{gather*}
	which contradicts \eqref{th:main:statDIV:t1<t2}.
	Therefore, \eqref{th:main:statDIV:deg} is true.
\end{proof}

\begin{statement} \label{th:main:statDIVI}
	Let $R(x) \in \PP[\le m][Q^\la]$, $2 \le m \le n$, $0 \le \la \le 1$, be a primitive integer polynomial,
	which is a product of $k$, $2 \le k \le n$, powers $t_i(x)$ of different primitive irreducible integer polynomials $p_i(x)$.
	\begin{gather*}
		R(x) = t_1(x) \cdot ... \cdot t_k(x),\\
		t_i(x) = p_i(x)^{e_i}, \, e_i \in \N, \, i=1,...,k.
	\end{gather*}
	If at some point $\xi \in B_0$ we have
	\begin{gather*}
		\abs{R(\xi)} = Q^{-\tau},
		\tau > \rho + m \la + 7\D,
	\end{gather*}
	then for one of the factors $t_d(x) = t_i(x)$ the following holds for any $Q>Q_0(n)$:
	\begin{gather}		
		\nonumber
		\deg t_d = m_d \ge 2, \, H(t_d) = Q^{\la_d}, \, \abs{t_d(\xi)} = Q^{-\tau_d},\\
		\label{th:main:statDIVI:t1>}
		\tau_d > \tau - \frac{1}{2} m\la + \frac{1}{2} m_d\la_d +\frac{1}{2}(m-m_d)(\la-\la_d) - \D.
	\end{gather}
\end{statement}
\begin{proof}
	According to \eqref{lm:H1H2=H12:Q>}, for any integer polynomial divisor $S(x)$ of $R(x)$ we have 
	$H(S) \le H(R) Q^{\frac{\D}{n}} \le Q^{1 + \D}$, 
	so condition \eqref{th:main:statDIV:mili} of Statement \ref{th:main:statDIV} is always satisfied for the divisors of $R(x)$.
	
	By definition, $R(x)$ is a product of two primitive integer polynomials
	having no common roots and we may apply Statement \ref{th:main:statDIV}:
	\begin{gather}	
		\label{th:main:statDIVI:R=s1s2}
		R(x) = s_1(x) s_2(x), \\
		\nonumber
		\deg s_i = m_i, \, H(s_i) = Q^{\la_i}, \abs{s_i(\xi)} = Q^{-\tau_i}, \, i=1,2, \\
		\label{th:main:statDIVI:R=s1l1<s2l2}
		\frac{\abs{s_1(\xi)}}{H(s_1)} = Q^{-\tau_1-\la_1} 
		\le Q^{-\tau_2-\la_2} = \frac{\abs{s_2(\xi)}}{H(s_2)},
	\end{gather}	
	thus obtaining the following:
	{
	\setlength{\belowdisplayskip}{0pt}
	\begin{gather}	
		\label{th:main:statDIVI:t2l2<}
		\tau_2 + \la_2 < \frac{1}{2}m\la - \frac{1}{2}m_1\la_1 - \frac{1}{2}(m-m_1)(\la-\la_1) + \D,
	\end{gather}	
	}
	{
	\setlength{\abovedisplayskip}{0pt}
	\begin{align}	
		\begin{aligned}
		\label{th:main:statDIVI:t(s1)>}
		\tau_1 > \tau - \frac{1}{2}m\la + \frac{1}{2}m_1\la_1 + & \frac{1}{2}(m-m_1)(\la-\la_1) -\D > \\
		> \rho + \frac{1}{2}m\la + \frac{1}{2}m_1\la_1 + &6\D >
		\rho + m_1\la_1 + 5\D.
		\end{aligned}
	\end{align}		
	}
	If $s_1(x)=t_i(x)$ for some $i$, Statement \ref{th:main:statDIVI} is proved.	
	Otherwise, we may again write down $s_1(x)$ 
	as a product of two primitive integer polynomials having no common roots
	and apply Statement \ref{th:main:statDIV} to $s_1(x)$ (as we have \eqref{th:main:statDIVI:t(s1)>}):
	\begin{gather}		
		\nonumber
		s_1(x) = s_{11}(x) s_{12}(x), \\
		\nonumber
		\deg s_{1i} = m_{1i}, \, H(s_{1i}) = Q^{\la_{1i}}, \abs{s_{1i}(\xi)} = Q^{-\tau_{1i}}, \, i=1,2, \\
		\frac{\abs{s_{11}(\xi)}}{H(s_{11})} = Q^{-\tau_{11}-\la_{11}} 
		\nonumber
		\le Q^{-\tau_{12}-\la_{12}} = \frac{\abs{s_{12}(\xi)}}{H(s_{12})},
	\end{gather}	
	which gives us:
	\begin{gather}	
		\label{th:main:statDIVI:t12l12<}
		\tau_{12} + \la_{12} < \frac{1}{2}m_1\la_1 - \frac{1}{2}m_{11}\la_{11} - \frac{1}{2}(m_1-m_{11})(\la_1-\la_{11}) + \D.
	\end{gather}
	From \eqref{th:main:statDIVI:t2l2<} and \eqref{th:main:statDIVI:t12l12<} we obtain:
	\begin{gather*}
		\tau_{12} + \tau_2 \le (\tau_{12} + \la_{12}) + (\tau_2 + \la_2)< \frac{1}{2}m\la + 2\D, \\
		\tau_{11} \ge \tau - \tau_{12} - \tau_2 > \tau - \frac{1}{2}m\la - 2\D > \frac{1}{2}m\la + 5\D,
	\end{gather*}
	therefore,
	\begin{gather*}
		\frac{\abs{s_{11}(\xi)}}{H(s_{11})} < Q^{-\frac{1}{2}m\la - 5\D} < Q^{-\frac{1}{2}m\la - 3\D} < \frac{\abs{s_{12}(\xi)s_{2}(\xi)}}{H(s_{12})H(s_{2})}Q^{-\D} < \frac{\abs{s_{12}(\xi)s_{2}(\xi)}}{H(s_{12}s_{2})},
	\end{gather*}
	so we're back to \eqref{th:main:statDIVI:R=s1s2} and \eqref{th:main:statDIVI:R=s1l1<s2l2}
	with $R(x)=\val{s_{11}(x)}\val{s_{12}(x)s_2(x)}$, 	
	but the degree of the first factor is strictly less than initially.
	After doing a finite amount of such steps the first factor will necessarily be some $t_i(x)$,
	and so we prove Statement \ref{th:main:statDIVI}.
\end{proof}

We have an interval $K_0$ such that $\eqref{th:main:statRho:countP}$ holds, 
so by the pigeonhole principle we can find two polynomials $P_1$ and $P_2$ with the major $n-1$ coefficients
being close.
Subtracting them, we obtain new polynomials $R_m(x)$ for different values of $m$.
\begin{statement} \label{th:main:statRm}	
	Let $m_0$ be an integer such that $v(1+\ga)-\rho-1 \le m_0 < v(1+\ga)-\rho$.
	We may further assume that $v(1+\ga) > \rho + 2$, so $m_0 \ge 2$.
	If we choose $c_2(n)$ large enough, then for $Q>Q_0(n)$ and for any integer $m_0 \le m \le n$ 
	there exists a primitive integer polynomial $R_m(x)$ satisfying the following:
	\begin{align} \label{th:main:statRm:estR}
		\begin{split}
			&2 \le \deg R_m \le m,      \\
			&H\val{R_m} \le Q^{\la}, \la = \frac{v(1+\ga)-2-\rho}{m-1}, \\
			&\abs{R_m(\xi)} < Q^{1-2v+\D} \,\, \forall \xi \in K_0,\\
			&\abs{R'_m(\xi)} < Q^{1-v+\D} \,\, \forall \xi \in K_0.
		\end{split}
	\end{align}	
	Each polynomial $R_m(x)$ has a divisor $t_d(x)$, which is a power of some primitive irreducible integer polynomial,
	such that
	\begin{gather}		
		\nonumber
		\deg t_d = m_d \ge 2, \, H(t_d) = Q^{\la_d}, \, \abs{t_d(\xi)} < Q^{-\tau_d} \,\, \forall \xi \in K_0,\\
		\label{th:main:statRm:t1>}
		\tau_d = (2v-1) - \frac{1}{2} m\la + \frac{1}{2} m_d\la_d +\frac{1}{2}(m-m_d)(\la-\la_d) - 3\D,\\
		\label{th:main:statRm:t1>simple}
		\tau_d > \frac{1}{2}\val{\rho + v(3-\ga) + m_d\la_d - 1} - 3\D.
	\end{gather}
\end{statement}
\begin{proof}
	By definition, any $P(x) = a_0 + a_1 x + ... + a_n x^n \in \PP[n][Q,v,K_0]$ has a root $\al \in K_0$.
	Using the Taylor series expansion with Lagrange remainder, we obtain for any $\xi \in K_0$
	\begin{gather}
		\label{th:main:statRm:P<}
		\abs{P(\xi)} \le \abs{P'(\al)} \abs{\xi-\al} + \frac{1}{2} \abs{P''(\xi_1)} \abs{\xi-\al}^2 \le c(n) Q^{1-2v} < Q^{1-2v+\D}, \\
		\label{th:main:statRm:P'<}
		\abs{P'(\xi)} \le \abs{P'(\al)}+\abs{P''(\xi_1)} \abs{\xi-\al} \le c(n) Q^{1-v} < Q^{1-v+\D},
	\end{gather}
	where $\xi_1 \in (\min(\xi,\al), \max(\xi,\al))$, and so $\abs{P''(\xi_1)} \ll_n Q$.
		
	Coefficients $a_2, ..., a_n$ of such polynomials $P(x)$ are located in the intervals $L_i = [-Q,Q]$, $i=2,...,n$.
	Take some integer $1 \le m \le n$ and real $0 \le \la \le 1$ and cover each interval $L_i$ with a minimal set of non-intersecting half-open intervals 
	$M_{j_i} \subset \R$ of length $\frac{1}{n^2}Q^{\la}$ for $i=2,...,m$ and 
	of length $Q^0=1$ for $i=m+1,...,n$.
	We have $\#\{M_{j_i}\} \le 4 n^2 Q^{1-\la}$ for $i=2,...,m$ and 
	$\#\{M_{j_i}\} \le 4Q$ for $i=m+1,...,n$.
	If we let $c_2(n) = (2n)^{2n}$ then for $m$ and $\la$ such that
	\begin{gather*}
		\la(m-1) \ge v(1+\ga)-2-\rho,
	\end{gather*}
	we have
	\begin{gather*}
		\#\PP[n][Q,v,K_0] \ge c_2(n) Q^{n+1-v(1+\ga)+\rho} > \\
		> (2n)^{2(n-1)} Q^{(m-1)(1-\la)+n-m} \ge \#\curly{M_{j_2} \times ... \times M_{j_n}},
	\end{gather*}
	so there are at least two different polynomials $P_1(x)$, $P_2(x) \in \PP[n][Q,v,K_0]$,
	having coefficients $a_2, ..., a_n$ in the same parallelepiped $M_{j_2} \times ... \times M_{j_n}$.
	Let $R_m(x) = P_2(x)-P_1(x) = r_0+r_1 x + ... \,$.
	Obviously, $\deg R_m \le m$ and $\abs{r_i} \le \frac{1}{n^2}Q^{\la}$ for $i=2,...,m$.
	Estimates \eqref{th:main:statRm:P<} and \eqref{th:main:statRm:P'<} still hold for $R_m(x)$ and $R'_m(x)$,
	so we can estimate $\abs{r_1}$ assuming $Q>Q_0(n)$:
	\begin{gather*}
		\abs{m r_m \xi^{m-1}  + ... + 2 r_2 \xi + r_1} = \abs{R_m'(\xi)} \ll_n Q^{1-v} \,\, \forall \xi \in K_0,\\
		\abs{r_1} \le c(n)Q^{1-v}  + \frac{3}{4}Q^{\la} \le \frac{1}{4} + \frac{3}{4}Q^{\la} \le Q^{\la},
	\end{gather*}
	and then estimate $\abs{r_0}$, thus estimating $H(R_m)$:
	\begin{gather*}
		\abs{m r_m \xi^{m}  + ... + r_1 \xi + r_0} = \abs{R_m(\xi)} \ll_n Q^{1-2v} \,\, \forall \xi \in K_0,\\
		\abs{r_0} \le c(n)Q^{1-2v} + \frac{3}{4}Q^{\la} \le \frac{1}{4} + \frac{3}{4}Q^{\la}  \le Q^{\la},\\
		H(R_m) \le Q^{\la}.
	\end{gather*}
	If $\deg R_m \le 1$ then, using the estimates above, we can prove that 
	\begin{align*}
		\abs{r_1} &\le c(n)Q^{1-v} < 1,\\
		\abs{r_0} &\le c(n)Q^{1-2v} < 1,
	\end{align*}
	therefore, $R_m(x) = 0$, 
	which contradicts $P_1(x) \not= P_2(x)$.
	If $v(1+\ga) \le \rho + 2$, we may choose $m=1$, which gives a contradiction by the same argument.	
	If $R_m(x)$ is not primitive, we may cancel out the common factor of its coefficients
	without affecting the validity of the estimates \eqref{th:main:statRm:estR}.
	
	If $R_m(x)$ is a power of some primitive irreducible polynomial, taking $t_d(x)=R_m(x)$ finishes the proof.
	Otherwise, observe that Statement \ref{th:main:statDIVI} holds at each point $\xi \in B_0 $:
	\begin{gather*}
		\abs{R_m(\xi)} < Q^{-\tau}, \tau = 2v-1-\D,
	\end{gather*}
	and also
	\begin{gather} \label{th:main:statRm:ml<}
		m\la = m \frac{v(1+\ga) - 2 - \rho}{m-1} \le m \frac{v(1+\ga) - 1 - \rho}{m} = v(1+\ga) - 1 - \rho,
	\end{gather}
	therefore,
	\begin{gather*} 
		\tau = 2v-1-\D = \rho + (v(1+\ga) -1 -\rho) + (v(1-\ga)-\D) \ge \\
		\ge \rho + m\la + v(1-\ga)-\D > \rho + m\la + 7\D,
	\end{gather*}
	as long as we satisfy 
	\begin{gather*}
		v(1-\ga) > 8\D(n),
	\end{gather*}
	which is true according to \eqref{th:main:<v<} and \eqref{th:main:D}.
	Therefore, for each point $\xi \in B_0$ we may extract a divisor $t_{d,\xi}(x)$ of $R_m(x)$, 
	for which \eqref{th:main:statDIVI:t1>} holds.
	As the number of such divisors of $R_m(x)$ doesn't exceed $n$, 
	for at least one divisor $t_d(x)$ estimate \eqref{th:main:statDIVI:t1>} holds for all points $\xi \in B_d$ 
	of some large enough subset $B_d$ of $B_0$, $\mu_1 B_d \ge \frac{1}{n} \mu_1 B_0 \gg_n \mu_1 K_0$.
	Therefore, according to Lemma \ref{lm:P(B)<}, we may extend estimate \eqref{th:main:statDIVI:t1>} for $t_d(x)$ from $B_d$ to the whole interval $K_0$
	with loss of $\D$, i.e. we obtain \eqref{th:main:statRm:t1>}.
	Substituting the upper bound \eqref{th:main:statRm:ml<} for $m\la$, we obtain \eqref{th:main:statRm:t1>simple}.
\end{proof}

\begin{statement} \label{th:main:statp1}
	Divisors $t_d(x)$ of all polynomials $R_m(x)$, $m_0 \le m \le n$, produced by Statement
	\ref{th:main:statRm}, are the powers of the same primitive irreducible polynomial.
	Let $t_e(x)$ be such power of minimal degree.
	Then $t_e(x)$ satisfies the following constraints:
	\begin{align} \label{th:main:statp1:te}
		\begin{split}
		\deg t_e       &= m_e      \le  m_0 < v(1+\ga)-\rho, \\
		H(t_e)         &= Q^{\la_e} <   Q^{\frac{v(1+\ga)-2-\rho}{n-1} + \frac{\D}{n}},\\
		\abs{t_e(\xi)} &< Q^{-\tau_e} \,\, \forall \xi \in K_0,
		\end{split}
	\end{align}
	with $\tau_e$ estimated by \eqref{th:main:statRm:t1>simple}.
\end{statement}
\begin{proof}
	For integer $m_0 \le m \le n-1$ consider polynomials $R_a(x)=R_m(x)$ and $R_b(x)=R_{m+1}(x)$ :
	\begin{gather*}
		\deg R_a \le m_a = m,   \, H\val{R_a} \le Q^{\la_a}, \, \la_a = \frac{v(1+\ga)-2-\rho}{m-1}, \\
		\deg R_b \le m_b = m+1, \, H\val{R_b} \le Q^{\la_b}, \, \la_b = \frac{v(1+\ga)-2-\rho}{m},
	\end{gather*}
	and their respective divisors $t_1(x)$ and $t_2(x)$, 
	\begin{gather} 
		\label{th:main:statp1:degHt1}
		2 \le \deg t_1 = m_1 \le \bar{m}, \,
		H(t_1) = \la_1 < \la_a + \frac{\D}{n} \le \bar{\la}, \,
		\abs{t_1(\xi)} < Q^{-\tau_1}, \, \forall \xi \in K_0,\\
		\label{th:main:statp1:degHt2}
		2 \le \deg t_2 = m_2 \le \bar{m}, \,
		H(t_2) = \la_2 < \la_b + \frac{\D}{n} \le \bar{\la}, \,
		\abs{t_2(\xi)} < Q^{-\tau_2}, \, \forall \xi \in K_0,
	\end{gather}
	with $\tau_1$ and $\tau_2$ estimated by \eqref{th:main:statRm:t1>simple} and
	\begin{gather*}		
		\bar{m} = m+1, \, \bar{\la} = \frac{v(1+\ga)-\rho}{m+1} + \frac{\D}{n}.
	\end{gather*}
	Assume that $t_1(x)$ and $t_2(x)$ are the powers of different primitive irreducible polynomials,
	i.e. they don't have common roots.
	We're going to use symmetric estimates 
	\eqref{th:main:statRm:t1>simple}, \eqref{th:main:statp1:degHt1}, \eqref{th:main:statp1:degHt2}
	for $t_1(x)$ and $t_2(x)$,
	therefore, we may assume w.l.o.g that $\tau_1 + \la_1 \le \tau_2 + \la_2$.
	
	We have $m_1 + m_2 \ge 4$, so we may use \eqref{lm:sylvester:cols3interval} on the interval $K_0$ taking $\delta=\D$:
	\begin{gather*}
		3\tau_1 - 2v \le 3\min\curly{\tau_1 + \la_1, \tau_2 + \la_2} - 2 v < m_1 \la_2 + m_2 \la_1 + \D 
		\le m_1 \bar{\la} + \bar{m} \la_1 + \D = \\
		= \bar{m} \bar{\la} + m_1\la_1 - (\bar{m}-m_1)(\bar{\la}-\la_1) + \D \le \bar{m} \bar{\la} + m_1\la_1 + \D.
	\end{gather*}
	Substituting estimate \eqref{th:main:statRm:t1>simple} for $\tau_1$, we obtain
	\begin{gather*}
		\frac{3}{2}\val{\rho + v(3-\ga) + m_1\la_1 - 1} - 9\D - 2v < v(1+\ga)-\rho + m_1\la_1 + 2\D, \\
		v(3-5\ga) + 5\rho + m_1\la_1 < 3 + 22\D,		
	\end{gather*}
	which makes a contradiction given that
	\begin{gather*}	
		v(3-5\ga) > 3 + 22\D.
	\end{gather*}
	The latter is true according to \eqref{th:main:<v<} and \eqref{th:main:D}.
	
	Therefore, $t_1(x)$ and $t_2(x)$ are necessarily the powers of the same primitive irreducible polynomial, and so
	are all divisors $t_d(x)$ of each $R_m(x)$ for $m = m_0, ..., n$.
	If $t_e(x)$ is the minimal power, then it has to divide both $R_{m_0}(x)$ and $R_n(x)$, 
	therefore, estimates \eqref{th:main:statp1:te} are true.	
\end{proof}

\begin{statement} \label{th:main:statRtP}
	Polynomial $t_e(x)$ from Statement \ref{th:main:statp1} necessarily
	has common roots with each $P(x) \in \PP[n][Q,v,K_0]$.
\end{statement}
\begin{proof}
	Assume that $t_e(x)$ and $P(x)$ do not have common roots.
	We may apply \eqref{lm:sylvester:cols3interval} to $t_e(x)$ and $P(x)$ on the interval $K_0$, as $\deg t_e + \deg P \ge 4$:
	\begin{gather} \label{th:main:statRtP:inequ}
		3\min\val{\tau_e + \la_e, 2v-\D} - 2v < m_e + \la_e n + \D.
	\end{gather}
	Substituting \eqref{th:main:statRm:t1>simple} into the left-hand side of \eqref{th:main:statRtP:inequ} we obtain:
	\begin{gather*}
		\frac{3}{2}\val{\rho + v(3-\ga) + m_e\la_e - 1} - 9\D - 2v < 3\min\val{\tau_e + \la_e, 2v-\D} - 2v.
	\end{gather*}
	The right-hand side of \eqref{th:main:statRtP:inequ} can be estimated using \eqref{th:main:statp1:te} as follows:
	\begin{gather*}
		m_e + \la_e n + \D < v(1+\ga)-\rho + \val{\frac{v(1+\ga)-2-\rho}{n-1} + \frac{\D}{n}}n + \D = \\
		= 2v(1+\ga)-2-2\rho + \frac{v(1+\ga)-2-\rho}{n-1} + 2\D.
	\end{gather*}
	Putting it all together and doing some cancellation, we have
	\begin{gather*}
		v\val{\frac{1}{2} - \frac{7}{2}\ga} + \frac{1}{2} + \frac{7}{2} \rho + \frac{3}{2} m_e\la_e < \frac{v(1+\ga)-2-\rho}{n-1} + 11\D,
	\end{gather*}
	which makes a contradiction if
	\begin{gather*}
		\ga = \frac{1}{7}, \,\, \frac{1}{2} > \frac{v(1+\ga)-2}{n-1} + 11\D.
	\end{gather*}
	The latter condition is satisfied as long as we have
	\begin{gather*}
		11\D(n-1) < 1, \, v \le \frac{n+1}{2(1+\ga)} = \frac{7}{16}(n+1).
	\end{gather*}
	which is true according to \eqref{th:main:<v<} and \eqref{th:main:D}.
\end{proof}

From Statement \ref{th:main:statRtP} it follows that all polynomials $P(x) \in \PP[n][Q,v,K_0]$ have common roots, 
which is not true by definition.
Therefore, assuming the opposite to \eqref{th:main:stat} we obtain a contradiction.
This finishes the proof of Theorem \ref{th:main}.

\section{Acknowledgements}
The authors would like to thank 
Prof. Vasili Bernik for suggesting the problem and the initial methods of the solution, 
Prof. Victor Beresnevich for pointing out the important work of Prof. Kirill Tishchenko and many useful comments,
and Anna Gusakova for reading through the paper and making some important corrections and remarks.

This research was partially supported
by Engineering and Physical Sciences Research Council Programme Grant EP/J018260/1 and 
by Belarusian Republican Foundation for Fundamental Research Grant F17-101.


\newpage
\noindent Alexey Kudin,\\
University of York,\\
Heslington, York YO10 5DD, United Kingdom,\\
e-mail: alexey.kudin@york.ac.uk\\
\\
\noindent Denis Vasilyev,\\
Institute of Mathematics,\\
National Academy of Sciences of Belarus,\\
Surganova str. 11, 220072 Minsk, Belarus,\\
e-mail: vasilyev@im.bas-net.by\\
\\

\end{document}